
\documentclass{amsart}
\usepackage{amscd,amsmath,amssymb,amsthm,latexsym,pinlabel}


\hyphenation{mani-fold}


\def\co{\colon\thinspace}
\newcommand{\C}{{\mathbb C}}
\newcommand{\D }{{\mathbb D}}
\newcommand{\E }{{\mathbb E}}
\newcommand{\N }{{\mathbb N}}
\newcommand{\R}{{\mathbb R}}
\newcommand{\Hy}{{\mathbb H}}

\newcommand{\bi}{{\bf i}}
\newcommand{\ip}{\,\rule{2.3mm}{.2mm}\rule{.2mm}{2.3mm}\; }


\newtheorem{thm}{Theorem}
\newtheorem{lem}[thm]{Lemma}
\newtheorem{prop}[thm]{Proposition}

\theoremstyle{definition}
\newtheorem*{defn}{Definition}
\newtheorem*{rem}{Remark}
\newtheorem*{ack}{Acknowledgements}


\begin{document}

\title[Gibbons--Hawking ansatz and Blaschke products]{A homogeneous
Gibbons--Hawking ansatz and Blaschke products}

\author{Hansj\"org Geiges}
\address{Mathematisches Institut, Universit\"at zu K\"oln,
Weyertal 86--90, 50931 K\"oln, Germany}
\email{geiges@math.uni-koeln.de}
\thanks{H.~G. is partially supported by
         DFG grant GE 1245/1-2 within the framework of the
         Schwer\-punkt\-programm ``Globale Differentialgeometrie''.}

\author[Jes\'us Gonzalo]{Jes\'us Gonzalo P\'erez}
\address{Departamento de Matem\'aticas,
Universidad Aut\'onoma de Ma\-drid, 28049 Madrid, Spain.}
\email{jesus.gonzalo@uam.es}
\thanks{J.~G. is partially supported by
         grants MTM2004-04794 and MTM2007-61982 from MEC Spain.}

\date{}

\begin{abstract}
A homogeneous Gibbons--Hawking ansatz is described, leading to
$4$-dimensional hyperk\"ahler metrics with homotheties. In combination
with Blaschke products on the unit disc in the complex plane, this ansatz
allows one to construct infinite-dimensional families of such
hyperk\"ahler metrics that are, in a suitable sense, complete.
Our construction also gives rise to incomplete metrics on $3$-dimensional
contact manifolds that induce complete Carnot--Carath\'eodory
distances.
\end{abstract}

\maketitle

\section{Introduction}
The aim of this paper is to present an intriguing construction
of hyperk\"ahler structures.

In \cite[Theorem 10]{gego08} we exhibited a global rigidity of
$4$-dimensional hyperk\"ahler metrics: if such a metric $g$ admits
a homothetic vector field with a compact transversal, then $g$ is
flat. We proved this by an argument involving the integral formula for the
signature of a compact $4$-manifold, applied to a quotient of a
neighbourhood of the transversal. That line of reasoning obviously
suggests the question whether the compactness hypothesis can be
weakened to a completeness condition.

In the present paper we define the natural notion of completeness
for a Riemannian metric with a homothety (slice-completeness) and
give a construction leading to non-flat, slice-complete
hyperk\"ahler structures. In Section~\ref{section:GH} we discuss
a homogeneous Gibbons--Hawking ansatz, which forms the
basis for our construction. In Section~\ref{section:canonical} we
derive explicit formul\ae\ for various metrics that arise
in this construction. The main part of the construction of our
examples is contained in Section~\ref{section:example}. In view of a related
{\em incompleteness} result from~\cite{gego08}, see
Theorem~\ref{thm:incomplete} below, it was to be expected (and is
confirmed here) that
the construction of such slice-complete examples would be quite delicate.
So it is all the more surprising that our construction actually yields
an infinite-dimensional family of isometry classes of such
structures (Section~\ref{section:many}). In Section~\ref{section:non-flat}
it is shown that our construction does indeed give rise to
{\em non-flat} hyperk\"ahler metrics. In Section~\ref{section:CC}
we relate our construction to the theory of taut contact
spheres developed in~\cite{gego08}; this relation originally motivated
the search for the hyperk\"ahler metrics described here. In that context we
describe another surprising phenomenon, namely, examples of
incomplete Riemannian metrics giving rise to complete
Carnot--Carath\'eodory distances.
\section{A homogeneous Gibbons--Hawking ansatz}
\label{section:GH}
The Gibbons--Hawking ansatz~\cite{giha78} allows one to construct
hyperk\"ahler metrics with an $S^1$-invariance.
We want to study metrics arising from this ansatz, subject to an
additional homogeneity property amounting to the existence of a
homothetic vector field.

\begin{defn}
A vector field $Y$ is {\bf homothetic} for the Riemannian metric
$g$ if it satisfies $L_Yg=g$. The {\bf canonical slice}
corresponding to such a vector field is the subset defined by the
equation $g(Y,Y)=1$.
\end{defn}

The canonical slice is a hypersurface transverse to $Y$ and, if
the flow of $Y$ is complete, it intersects each orbit exactly
once. For a cone metric $g=e^{2s}(ds^2+\overline{g})$ and
$Y=\partial_s$, the canonical slice is the hypersurface $\{ s=0\}$
orthogonal to $Y$. Most homothetic fields, however, are not
orthogonal to any hypersurface; in such situations our definition
still gives a natural choice of transversal.

A Riemannian metric on a product $M\times\R$ with translation
along the $\R$-factor as homotheties is necessarily incomplete
in the $\R$-direction: proper paths of the form
$\{ p\}\times (-\infty ,s_0]\subset M\times\R$
have finite length in such a metric. Therefore, the best one
can aim for is completeness in the transverse directions.

\begin{defn}
A Riemannian metric on a product $M\times\R$ with translation
along the $\R$-factor as homotheties is {\bf slice-complete} if
the canonical slice is complete in the induced metric.
\end{defn}

For the construction of our examples, we shall be working on a
$4$-manifold $W$ of the form $W=\Sigma \times\R_t\times S^1_\theta$
with $\Sigma$ an open surface; the subscripts denote the respective
coordinates. We look for hyperk\"ahler structures
$(g,\Omega_1,\Omega_2,\Omega_3)$ with the following properties:
\begin{itemize}
\item[(i)] The flow of $\partial_\theta$ preserves the metric $g$, and
$\partial_{\theta}$ is
a Hamiltonian vector field for each of the symplectic
forms~$\Omega_i$.
\item[(ii)] The vector field $\partial_t$ satisfies
$L_{\partial_t}\Omega_i=\Omega_i$, $i=1,2,3$, hence also
$L_{\partial_t}g=g$. Notice that this is stronger than just being
homothetic.
\end{itemize}

The partial differential equations for a hyperk\"ahler structure
linearise under condition~(i) to the $3$-dimensional Laplace equation.
Under the additional condition (ii), one can reduce these equations
further to the Cauchy--Riemann equations in real
dimension~2. We next expand on these two claims.

Given a $3$-manifold $M$, any hyperk\"ahler structure on the product
$M\times S^1_\theta$ satisfying condition~(i) can be described by
the Gibbons--Hawking ansatz. In this ansatz, one selects Hamiltonian
functions $x_1,x_2,x_3$ such that $dx_i=\partial_\theta\ip\Omega_i$.
Then there exist a unique $1$-form $\eta$ and a unique positive
function $V$ giving the following expressions for the symplectic
forms $(\Omega_1,\Omega_2,\Omega_3)$, where $(i,j,k)$ runs over
the cyclic permutations of $(1,2,3)$:
\begin{equation}
\label{eqn:GH-forms}
\Omega_i=(d\theta +\eta )\wedge dx_i+V\, dx_j\wedge dx_k,
\end{equation}
and the following one for the hyperk\"ahler metric:
\begin{equation}
\label{eqn:GH-metric}
g= V^{-1}\cdot (d\theta +\eta )^2+V\cdot (dx_1^2+dx_2^2+dx_3^2).
\end{equation}
Here the forms $dx_1,dx_2,dx_3$ are a basis for the annihilator of
$\partial_\theta$, and so $(x_1,x_2,x_3)$ are (at least locally)
coordinates for the orbit space of $\partial_\theta$. The function
$V$ satisfies $\partial_\theta V\equiv 0$ and is thus locally a
function of only $(x_1,x_2,x_3)$. The $1$-form $\eta$ annihilates
$\partial_\theta$ and is invariant under its flow, so it is locally
pulled back from the orbit space of~$\partial_\theta$. All this
means that $\eta$ and $V$ are locally objects on
$(x_1,x_2,x_3)$-space, and we shall treat them as such for the purpose of
local calculations.

The projection onto the orbit space of $\partial_\theta$ is a
Riemannian submersion from the metric $(1/V)\,
g=g(\partial_\theta ,\partial_\theta )\, g$ to the Euclidean
metric $dx_1^2+dx_2^2+dx_3^2$. The condition for (\ref{eqn:GH-forms})
to define a triple of closed $2$-forms is then
\begin{equation}
\label{eqn:curl}
d\eta +*dV=0,
\end{equation}
where the Hodge star operator is in terms of that Euclidean metric and the
orientation defined by $(dx_1,dx_2,dx_3)$. For the (local) existence
of $\eta$ it is necessary and sufficient that $V$ be harmonic with
respect to the Euclidean metric. We call $V$ the
{\bf Gibbons--Hawking potential}.

The systematic construction of a hyperk\"ahler structure
on $M\times S^1_\theta$ proceeds as follows.
Start with a local diffeo\-mor\-phism $x=(x_1,x_2,x_3)\co M\to \R^3$,
and consider the metric $x^*g_{\R^3}$, where $g_{\R^3}$ is the
standard Euclidean metric on $\R^3$. Use $x$ also to pull the
standard orientation of $\R^3$ back to $M$. Let now $\eta$ and $V$
be a $1$-form and a function, respectively, defined on $M$ and satisfying
(\ref{eqn:curl}) with respect to the metric $x^*g_{\R^3}$ and the
pulled-back orientation. By lifting $x,\eta ,V$ to $M\times
S^1_\theta$ in the obvious way, and inserting them into
the defining equations (\ref{eqn:GH-forms})
and~(\ref{eqn:GH-metric}), we obtain a hyperk\"ahler structure on
$M\times S^1_\theta$ invariant under the flow of~$\partial_\theta$.

One may regard $(M,x^*g_{\R^3})$ as a {\em non-schlicht} domain in
$\R^3$, and $\eta$ and $V$ as multiple-valued objects in $\R^3$. On
$M$, however, they are perfectly well defined, and so
equation~(\ref{eqn:curl}) can be read as an identity on~$M$.

We now restrict our attention to $3$-manifolds $M$ of the form
$M=\Sigma \times\R_t$ with $\Sigma$ an open surface, and impose both
conditions (i) and~(ii). The following definition is useful for
describing the special features of this case.

\begin{defn}
A tensorial object {\bf o} (on $W=M\times S^1_{\theta}$ or on~$M$)
is called {\bf homogeneous of degree $k$}
if $L_{\partial_t}{\bf o}=k\cdot{\bf o}$.
\end{defn}

Condition (ii) requires that $g$ and the symplectic forms be
homogeneous of degree~$1$. Since $\partial_\theta$ is invariant
under the flow of $\partial_t$, the potential
$V=1/g(\partial_\theta ,\partial_\theta )$ must be homogeneous of
degree~$-1$. Condition (ii) provides a convenient choice for the
Hamiltonian functions~$x_i$, because one easily checks that
$x_i:=\Omega_i (\partial_\theta ,\partial_t)$ satisfies
$dx_i=\partial_\theta\ip\Omega_i$ in this case. This choice has
the virtue that the $x_i$ are homogeneous of degree~$1$, i.e.\
that $\partial_tx_i=x_i$. We call
$x:=(x_1,x_2,x_3)\co\Sigma \times\R_t\to\R^3\setminus\{ 0\}$ the
{\bf momentum map}. Then the equations $\partial_tx_i=x_i$ say
that $\partial_t$ is $x$-related to the position vector field on~$\R^3$.

The uniqueness of $\eta$ for given Hamiltonian functions $x_i$
implies that in the present situation both $\eta$ and the function $\eta
(\partial_t)$ are homogeneous of degree~$0$. Consider now the
function $\rho :=|x|\co\Sigma \times \R_t\to\R^+$. The $1$-form
$\frac{d\rho}{\rho}$ is homogeneous of degree~$0$ and satisfies
$\frac{d\rho}{\rho}\, (\partial_t)\equiv 1$. Hence
\[ \eta = \eta (\partial_t)\, \frac{d\rho}{\rho}+\xi ,\]
where the $1$-form $\xi$ is homogeneous of degree $0$ (i.e.\
$\partial_t$-invariant), $\partial_{\theta}$-invariant, and it
annihilates both $\partial_t$ and~$\partial_{\theta}$. So $\xi$ is the
pull-back of a $1$-form on~$\Sigma$, for which we continue to
write~$\xi$.

The map $x/|x|\co\Sigma\times\R_t\to S^2$ is
independent of $t$ and thus the pull-back of a unique map
$\Phi\co\Sigma\to S^2$. The latter is a local diffeomorphism. We endow
the unit sphere $S^2$ with the metric induced from the standard
metric on $\R^3$, and with the orientation as boundary of the
$3$-ball. This determines on $S^2$ a holomorphic structure and a
standard volume form~$\mbox{\rm Vol}_{S^2}$. We endow $\Sigma $ with
the holomorphic structure $J$ lifted from $S^2$ by $\Phi$, i.e.\
the structure that turns $\Phi\co\Sigma\to S^2$ into a local
biholomorphism.

\begin{thm}[The homogeneous Gibbons--Hawking ansatz]
\label{thm:GH}
The function $V$ and the $1$-form $\eta = \eta
(\partial_t)\, \frac{d\rho}{\rho}+\xi$ satisfy (\ref{eqn:curl}) on
$\Sigma\times\R_t$ if and only if the following two conditions are
satisfied:
\begin{itemize}
\item $d\xi = \rho V\,\Phi^*\mbox{\rm Vol}_{S^2}$,
\item $\varphi :=\eta (\partial_t)+\bi\rho V$ is the
pullback to $\Sigma\times\R_t$ of a holomorphic function
on~$(\Sigma ,J)$.
\end{itemize}
\end{thm}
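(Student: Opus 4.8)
The plan is to unwind equation~(\ref{eqn:curl}), $d\eta + *dV = 0$, into a system on $\Sigma\times\R_t$ by exploiting the product structure $\R^3\setminus\{0\}\cong \R^+_\rho\times S^2$ and the homogeneity of all objects involved. First I would decompose the exterior derivative and the Hodge star with respect to the splitting of the cotangent bundle into the radial part $\frac{d\rho}{\rho}$ and the pullback under $\Phi$ of forms on $S^2$; the relevant conversion is that the Euclidean metric pulled back via $x$ is $d\rho^2 + \rho^2\,\Phi^*g_{S^2}$, so the Hodge star of $\frac{d\rho}{\rho}$ is $\rho\,\Phi^*\mathrm{Vol}_{S^2}$ (up to sign fixed by the orientation conventions already set up), and conversely $*(\Phi^*\mathrm{Vol}_{S^2})$ is $\frac{1}{\rho}\,\frac{d\rho}{\rho}$-type, while $*$ of a form pulled back from $S^2$ of ``tangential'' type pairs it with $\frac{d\rho}{\rho}$.

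Next I would compute $dV$ and $d\eta$ separately. Since $V$ is homogeneous of degree $-1$, Euler's identity gives $\partial_t V = -V$, hence $dV = -V\,\frac{d\rho}{\rho} + d_\Sigma V$ where $d_\Sigma V$ is the part along $\Sigma$ (at fixed $\rho$, equivalently the differential of the degree-$0$ function $\rho V$ divided by $\rho$, rearranged). Dually, for $\eta = \eta(\partial_t)\,\frac{d\rho}{\rho} + \xi$ with $\xi$ pulled back from $\Sigma$ and $\eta(\partial_t)$ homogeneous of degree $0$, I would expand
\[
d\eta = d\bigl(\eta(\partial_t)\bigr)\wedge\frac{d\rho}{\rho} + d\xi,
\]
using $d\bigl(\frac{d\rho}{\rho}\bigr)=0$. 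Applying $*$ to $dV$ and matching the two ``types'' of $2$-forms on $\R^3$ (those of the form $\frac{d\rho}{\rho}\wedge(\text{something from }S^2)$ versus those pulled back from $S^2$) then splits (\ref{eqn:curl}) into exactly two scalar/form equations: the $\Phi^*\mathrm{Vol}_{S^2}$-component yields $d\xi = \rho V\,\Phi^*\mathrm{Vol}_{S^2}$, and the radial-mixed component yields a relation between $d(\eta(\partial_t))$ and $d(\rho V)$ along $\Sigma$, i.e. $d_\Sigma(\eta(\partial_t)) = \pm *_\Sigma d_\Sigma(\rho V)$ in terms of the conformal structure on $\Sigma$.

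Finally I would recognize this last relation as a Cauchy--Riemann equation. Since $\Phi\co(\Sigma,J)\to S^2$ is a local biholomorphism, $*_\Sigma$ acts on $1$-forms as the standard complex rotation for $J$, so the two real equations $d\xi = \rho V\,\Phi^*\mathrm{Vol}_{S^2}$ (which determines the imaginary part $\rho V$ up to the integrability of $\xi$) and the conjugate relation between $\eta(\partial_t)$ and $\rho V$ assemble into the statement that $\varphi = \eta(\partial_t) + \bi\rho V$ is holomorphic on $(\Sigma,J)$: its real and imaginary parts are conjugate harmonic functions, equivalently $d(\eta(\partial_t)) = -*_\Sigma d(\rho V)$ with the sign matching the chosen orientation. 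I expect the main obstacle to be purely bookkeeping: pinning down the signs and normalization constants in the Hodge-star identities $*\frac{d\rho}{\rho} = \rho\,\Phi^*\mathrm{Vol}_{S^2}$ and in the $J$-action on $1$-forms, so that the Cauchy--Riemann equations come out as $\partial_x u = \partial_y v$, $\partial_y u = -\partial_x v$ rather than with the opposite sign (which would make $\bar\varphi$ holomorphic instead). Once the conventions are fixed consistently with the orientation of $S^2$ as boundary of the $3$-ball and the given orientation of $(x_1,x_2,x_3)$-space, both directions of the equivalence follow by reading the same computation forwards and backwards.
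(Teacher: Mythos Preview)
Your proposal is correct and follows essentially the same approach as the paper's proof. The only difference is presentational: the paper chooses an explicit local holomorphic coordinate $u+\bi v$ on $\Sigma$ pulled back from $S^2$ via $\Phi$, computes $*d\rho$, $*du$, $*dv$ directly, and then reads off the system $d\xi=\rho V\,\Phi^*\mathrm{Vol}_{S^2}$, $\eta(\partial_t)_u=(\rho V)_v$, $\eta(\partial_t)_v=-(\rho V)_u$ in those coordinates, whereas you phrase the same splitting invariantly via $d_\Sigma$ and $*_\Sigma$; the content and the sign-bookkeeping you anticipate are identical.
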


\begin{proof}
Let $\tilde{u}+\bi\tilde{v}$ be a local holomorphic
coordinate on $S^2$, and let $u+\bi v$ be its pullback under
$\Phi$. Then $u,v$ are homogeneous of degree $0$ and $(\rho
,u,v)$ are local coordinates on $\Sigma\times\R_t$ giving the same
orientation as $(x_1,x_2,x_3)$.

Since $V$ is homogeneous of degree~$-1$, we have $V_\rho
=-\rho^{-1}V$, therefore
\[ dV=-\rho^{-1}V\, d\rho +V_u\, du +V_v\, dv.\]
On the other hand,
\[ *d\rho =\rho^2\,\Phi^*\mbox{\rm Vol}_{S^2},\quad
*du=-d\rho\wedge dv,\quad *dv=d\rho\wedge du,\]
and so
\[ *dV= -\rho V\,\Phi^*\mbox{\rm Vol}_{S^2} -
V_u\, d\rho\wedge dv +V_v\, d\rho\wedge du.\]
Since $\xi$ is homogeneous of degree~$0$ and annihilates $\partial_t$,
we have $\xi =\xi_1(u,v)\, du+\xi_2(u,v)\, dv$ and
\[ d\eta = (\xi_{2u}-\xi_{1v})\, du\wedge dv-
\eta (\partial_t)_v\,\frac{d\rho}{\rho}\wedge dv -\eta
(\partial_t)_u\,\frac{d\rho}{\rho}\wedge du.\]
Then (\ref{eqn:curl})
is seen to be equivalent to the system
\begin{eqnarray*}
d\xi                & = & \rho V\,\Phi^*\mbox{\rm Vol}_{S^2}, \\
\eta (\partial_t)_u & = & \;\;\;\rho V_v  \; = \;\;\;\: (\rho V)_v, \\
\eta (\partial_t)_v & = & -\rho V_u       \; = \; -(\rho V)_u.
\end{eqnarray*}
The last two equations are the Cauchy--Riemann equations
for~$\varphi$.
\end{proof}

In order to describe the systematic construction of hyperk\"ahler
structures satisfying (i) and (ii), it is convenient to fix the
complex structure $J$ on $\Sigma$ in advance. We then use {\em
holomorphic data\/} of the following kind
to construct the desired structures:
\begin{itemize}
\item a local biholomorphism $\Phi\co (\Sigma ,J)\to
S^2$,
\item a holomorphic function $\varphi\co (\Sigma ,J)\to\Hy$
with values in the upper half-plane~$\Hy$.
\end{itemize}

Being homogeneous of degree~$1$, the function $\rho$ must be of the
form $\rho=e^t\rho_0$ with $\rho_0$ the pullback of a positive
function on~$\Sigma$. Given a choice of $\Phi , \varphi ,\rho_0$, the
construction is as follows. The momentum map is given by
$x=\rho\,\Phi$. We take an antiderivative $\xi$ for $(\mbox{\rm
Im}\,\varphi )\,\Phi^*\mbox{\rm Vol}_{S^2}$ on $\Sigma$. Set $\eta
=(\mbox{\rm Re}\,\varphi )\, \frac{d\rho}{\rho} +\xi\;$ and $\;
V=(\mbox{\rm Im}\,\varphi)\,\rho^{-1}$, both pulled back to~$W$. The
hyperk\"ahler structure is given by (\ref{eqn:GH-forms})
and~(\ref{eqn:GH-metric}) with these values for~$x,\eta ,V$.

Given any pair of functions $h_1,h_2\co\Sigma\to\R$, the
diffeomorphism of $W$ given by
\[ \bigl( p,t,\theta \bigr)\longmapsto \bigl( p,t+h_1(p),\theta +h_2(p)\bigr)\]
preserves $\partial_t$ and $\partial_{\theta}$ and pulls the
hyperk\"ahler structure with data $(\Phi ,\varphi ,\rho_0 ,\xi)$
back to the one corresponding to
$(\Phi ,\varphi ,e^{h_1}\rho_0 ,\xi +dh_2)$.
This implies that if $\Sigma$ is simply-connected, then the triple
\[ \bigl(\mbox{\rm hyperk\"ahler structure},\partial_\theta ,
\partial_t\bigr) \]
is determined up to isomorphism by the holomorphic data $(\Phi ,\varphi )$
alone, thus allowing us to make any choice for $\rho_0$
and~$\xi$. For general $\Sigma$, the choice of $\xi$ matters, but
the function $\rho_0$ can always be chosen freely. We shall presently
establish a convenient choice for~$\rho_0$.
\section{Canonical metrics}
\label{section:canonical}
We continue to consider structures satisfying (i) and (ii). Recall
that the canonical slice is the hypersurface
\[ S=\{p\in W\co g(\partial_t,\partial_t )_p=1\}.\]
The vector field $\partial_{\theta}$ is a Killing field for $g$
and commutes with~$\partial_t$. So $\partial_{\theta}$ is tangent
to~$S$, and the restriction of $\partial_{\theta}$ to $S$ is a Killing
field for the $3$-dimensional metric $g_3$ induced on $S$ by
the hyperk\"ahler metric. That metric~$g_3$, in turn, induces
the quotient metric on
$S/(\mbox{\rm flow of}\;\partial_\theta )$ which makes the
projection a Riemannian submersion.

We now introduce the holomorphic function $\psi =-1/\varphi$, which still
takes values in the upper half-plane~$\Hy$.
The next lemma shows that the choice $\rho_0=
\mbox{\rm Im}\,\psi$ is especially convenient, because then $\{
t=0\}$ is the canonical slice.

\begin{lem}
\label{lem:slice}
The canonical slice is the product $G\times S^1_\theta$, where
$G$ is the surface in $\Sigma\times\R_t$ described
equivalently by any of the following equations:
\[
\begin{array}{ccl}
V    & = & |\varphi |^2 ,\\
\rho & = & \mbox{\rm Im}\,\psi ,\\
t    & = & \log \mbox{\rm Im}\,\psi -\log\rho_0.
\end{array} \]
\end{lem}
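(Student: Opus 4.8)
The plan is to compute $g(\partial_t,\partial_t)$ directly from the Gibbons--Hawking metric formula~(\ref{eqn:GH-metric}), specialised via the holomorphic data, and then simply read off its $1$-level set.

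First I would evaluate the ingredients of~(\ref{eqn:GH-metric}) on $\partial_t$. Since $t$ and $\theta$ are independent coordinates, $d\theta(\partial_t)=0$; from $\eta=(\mathrm{Re}\,\varphi)\,\frac{d\rho}{\rho}+\xi$ with $\xi(\partial_t)=0$ and $\frac{d\rho}{\rho}(\partial_t)=1$ we get $\eta(\partial_t)=\mathrm{Re}\,\varphi$; and since $\partial_t$ is $x$-related to the position vector field on $\R^3$, i.e.\ $dx_i(\partial_t)=x_i$, the Euclidean part of~(\ref{eqn:GH-metric}) contributes $V|x|^2=V\rho^2$. Hence $g(\partial_t,\partial_t)=V^{-1}(\mathrm{Re}\,\varphi)^2+V\rho^2$, and substituting $\rho V=\mathrm{Im}\,\varphi$ turns this into
\[ g(\partial_t,\partial_t)=\frac{\rho}{\mathrm{Im}\,\varphi}\bigl((\mathrm{Re}\,\varphi)^2+(\mathrm{Im}\,\varphi)^2\bigr)=\frac{\rho\,|\varphi|^2}{\mathrm{Im}\,\varphi}. \]

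Next I would observe that this expression involves no $\theta$, so the canonical slice is automatically a product $G\times S^1_\theta$ with $G=\{\,\rho\,|\varphi|^2=\mathrm{Im}\,\varphi\,\}\subset\Sigma\times\R_t$, which disposes of the product assertion. It then remains to put the equation $\rho\,|\varphi|^2=\mathrm{Im}\,\varphi$ into the three stated forms: multiplying by $|\varphi|^2$ and using $V=(\mathrm{Im}\,\varphi)\,\rho^{-1}$ gives $V=|\varphi|^2$; dividing by $|\varphi|^2$ gives $\rho=(\mathrm{Im}\,\varphi)/|\varphi|^2$, which by the elementary identity $\mathrm{Im}(-1/\varphi)=(\mathrm{Im}\,\varphi)/|\varphi|^2$ is precisely $\rho=\mathrm{Im}\,\psi$; and since $\rho=e^t\rho_0$ with $\rho_0>0$, taking logarithms gives $t=\log\mathrm{Im}\,\psi-\log\rho_0$. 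Finally, as $\psi$ also takes values in $\Hy$, the function $\mathrm{Im}\,\psi$ is positive everywhere on $\Sigma$, so the last form realises $G$ as the graph over $\Sigma$ of a smooth function, confirming in particular that $G$ is a surface.

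I do not expect a genuine obstacle here: the whole argument is one short computation together with the identity for $\psi=-1/\varphi$. The only points requiring slight care are the two facts borrowed from Section~\ref{section:GH} — that $\eta(\partial_t)=\mathrm{Re}\,\varphi$, and that $dx_i(\partial_t)=x_i$ because $\partial_t$ is $x$-related to the Euler field on $\R^3$ — and keeping track of signs in passing from $\varphi$ to $\psi$.
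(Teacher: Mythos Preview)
Your proof is correct and follows essentially the same computation as the paper's own: evaluate $g(\partial_t,\partial_t)$ from~(\ref{eqn:GH-metric}), substitute $\eta(\partial_t)=\mathrm{Re}\,\varphi$ and $\rho V=\mathrm{Im}\,\varphi$ from Theorem~\ref{thm:GH}, and read off the three equivalent descriptions of the level set. One tiny slip of the pen: to pass from $\rho\,|\varphi|^2=\mathrm{Im}\,\varphi$ to $V=|\varphi|^2$ you should divide by $\rho$ (and use $V=\mathrm{Im}\,\varphi/\rho$), not ``multiply by $|\varphi|^2$''.
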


\begin{proof}
Formula (\ref{eqn:GH-metric}) gives
\[ g(\partial_t,\partial_t)=V^{-1}\cdot(\eta (\partial_t))^2
+V\cdot (x_1^2+x_2^2+x_3^2).\]
By Theorem~\ref{thm:GH}, this can be written as
\[ g(\partial_t,\partial_t)=V^{-1}\cdot
(\hbox{\rm Re}\,\varphi )^2+V\cdot\rho^2 .\]
So the canonical slice
is given by the equation $V^{-1}\cdot (\hbox{\rm Re}\,\varphi)^2
+V\cdot\rho^2=1$, which again by Theorem~\ref{thm:GH} transforms to
\[ V=(\hbox{\rm Re}\,\varphi )^2+(\hbox{\rm Im}\,\varphi )^2
=|\varphi |^2 .\]
Using $\rho =(\rho V)/V$ and Theorem~\ref{thm:GH}, this is
seen to be equivalent to
\[ \rho =\frac{\rho V} {|\varphi |^2}
=\frac{\hbox{\rm Im}\,\varphi}{|\varphi |^2}
= \hbox{\rm Im}\,\psi .\]
The third description of the canonical slice then follows from
$\rho =e^t\rho_0$.
\end{proof}

The surface $G$ is the graph of a function $\Sigma\to\R_t$. Hence
the map $(p,t,\theta )\mapsto p$ induces a diffeomorphism $\sigma\co
S/(\mbox{\rm flow of}\;\,\partial_\theta )\to\Sigma$.

\begin{lem}
The diffeomorphism $\sigma$ sends the quotient metric to the
metric
\begin{equation}
\label{eqn:g-sigma}
g_\Sigma :=\frac{1}{|\psi |^2}\,\left[ (d\,\hbox{\rm Im}\,\psi)^2
+(\hbox{\rm Im}\,\psi )^2\,\Phi^*g_{S^2} \right] 
\end{equation}
on~$\Sigma$,
with $g_{S^2}$ denoting the standard metric on $S^2$.
\end{lem}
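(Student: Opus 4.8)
The plan is to recognise the quotient metric on the slice as the restriction of an easily computed metric on all of $\Sigma\times\R_t$, and then to substitute the formul\ae\ of Lemma~\ref{lem:slice}.

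First I would identify the metric on the orbit space of $\partial_\theta$ over the whole of $W$. Since $\partial_\theta$ is a Killing field with $g(\partial_\theta,\partial_\theta)=V^{-1}$, its orthogonal distribution is $\ker(d\theta+\eta)$, on which (\ref{eqn:GH-metric}) restricts to $V\cdot(dx_1^2+dx_2^2+dx_3^2)=V\cdot x^*g_{\R^3}$. Thus the flow of $\partial_\theta$ makes $W\to\Sigma\times\R_t$ a Riemannian submersion with base metric $V\cdot x^*g_{\R^3}$ (here $V$ and $x^*g_{\R^3}$ are genuine objects on $\Sigma\times\R_t$, even though $x$ need not be a global diffeomorphism). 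Writing the Euclidean metric in polar coordinates centred at the origin, $g_{\R^3}=dr^2+r^2g_{S^2}$, and using the decomposition $x=\rho\,\Phi$ with $\Phi$ valued in the unit sphere, so that the radial--spherical cross term vanishes, one gets $x^*g_{\R^3}=d\rho^2+\rho^2\,\Phi^*g_{S^2}$; hence the base metric equals $V\cdot(d\rho^2+\rho^2\,\Phi^*g_{S^2})$.

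Next I would pass from this global quotient to the quotient over the slice. By Lemma~\ref{lem:slice} the canonical slice $S$ equals $G\times S^1_\theta$ with $G$ the graph of a function $\Sigma\to\R_t$; in particular $S$ is the full preimage of $G$ under $W\to\Sigma\times\R_t$, and the induced map $S\to G$ is precisely the quotient of $(S,g_3)$ by the flow of $\partial_\theta$ (using that $\partial_\theta$ is tangent to $S$, as noted earlier). Now the restriction of a Riemannian submersion to the preimage of a submanifold is again a Riemannian submersion: the vertical distribution is unchanged, and the new horizontal distribution is the intersection of the old one with the tangent bundle of the preimage, on which the metric and the projection are unchanged. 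Consequently the quotient metric on $S/(\text{flow of }\partial_\theta)\cong G$ is the restriction to $G$ of $V\cdot(d\rho^2+\rho^2\,\Phi^*g_{S^2})$.

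It remains to compute this restriction and to transport it to $\Sigma$ along $\sigma$. On $G$ one has $\rho=\mathrm{Im}\,\psi$, so $d\rho$ pulls back (under $p\mapsto(p,\log\mathrm{Im}\,\psi-\log\rho_0)$, i.e.\ under $\sigma^{-1}$) to $d\,\mathrm{Im}\,\psi$; the map $\Phi$ does not depend on $t$, so $\Phi^*g_{S^2}$ pulls back to itself on $\Sigma$; and from $\psi=-1/\varphi$ one gets $\mathrm{Im}\,\psi=(\mathrm{Im}\,\varphi)/|\varphi|^2$, whence $V=(\mathrm{Im}\,\varphi)/\rho$ restricts on $G$ to $|\varphi|^2=1/|\psi|^2$, in agreement with the first description of $G$ in Lemma~\ref{lem:slice}. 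Substituting these into $V\cdot(d\rho^2+\rho^2\,\Phi^*g_{S^2})$ yields exactly the metric $g_\Sigma$ of~(\ref{eqn:g-sigma}). The only point needing a little care is the bookkeeping of the previous paragraph --- that the quotient metric of the restriction really is the restriction of the quotient metric --- together with the elementary fact that $d$ commutes with restricting $\rho$ to the graph $G$; everything else is direct substitution from Lemma~\ref{lem:slice}.
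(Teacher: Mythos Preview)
Your argument is correct and follows essentially the same route as the paper: identify the horizontal part of $g$ as the quadratic form $q=V\cdot(dx_1^2+dx_2^2+dx_3^2)$, rewrite it in spherical form as $V\cdot(d\rho^2+\rho^2\,\Phi^*g_{S^2})$, and then substitute $\rho=\mathrm{Im}\,\psi$ and $V=|\varphi|^2=1/|\psi|^2$ from Lemma~\ref{lem:slice}. Your extra care in phrasing the ``restriction of a Riemannian submersion'' step is equivalent to the paper's observation that $q$ is $\partial_\theta$-invariant with $\partial_\theta$ isotropic, so its restriction to $S$ is already the pullback of the quotient metric.
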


\begin{proof}
It follows from (\ref{eqn:GH-metric}) that the orthogonal
complement to $\partial_\theta$ is described by the equation
$d\theta +\eta =0$, both
on $W$ and on $S$. The restriction of the hyperk\"ahler metric to
this complement is then the same as the restriction of the quadratic
form $q:=V\cdot (dx_1^2+dx_2^2+dx_3^2)$. This $q$ is
$\partial_{\theta}$-invariant and has $\partial_\theta$ as an
isotropic direction, therefore its restriction to the canonical
slice $S$ is the pullback of the quotient metric under the quotient
projection.

The standard metric of $\R^3$ equals $dr^2 +r^2\, g_{S^2}$ in
spherical coordinates. This and the equations $\rho =\mbox{\rm
Im}\,\psi$ and $V=|\varphi |^2$ from Lemma~\ref{lem:slice} imply that
the restriction of $q$ to $S$ is given by the right-hand side of
(\ref{eqn:g-sigma}), with $\Phi\co\Sigma\to S^2$ replaced
by~$x/\rho\co S\to S^2$. It is then clear that $\sigma$ pulls
$g_\Sigma$ given by (\ref{eqn:g-sigma}) back to the quotient metric.
\end{proof}
\section{A slice-complete example}
\label{section:example}
The orbits of the Killing field $\partial_\theta$ in the canonical
slice are circles (of variable length $2\pi V^{-1/2}=2\pi\, |\psi
|$). So it is clear that the induced metric $g_3$ is complete if
and only if the quotient metric is complete; the latter in turn is
isometric to~$g_\Sigma$. In the sequel the Riemann surface
$(\Sigma ,J)$ will be the unit disc $\D\subset\C$.

\begin{thm}
\label{thm:example}
There are holomorphic data~$(\Phi ,\psi )$ on $\D$
such that formula~(\ref{eqn:g-sigma}) defines a complete metric
$g_{\D}$ on~$\D$. Thus, the pair $(\Phi ,\psi )$ gives rise to a
slice-complete hyperk\"ahler metric on $\D\times\R_t\times S^1_{\theta}$.
\end{thm}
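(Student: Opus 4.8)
The plan is to produce explicit holomorphic data $(\Phi,\psi)$ on $\D$, feed them into~(\ref{eqn:g-sigma}), and prove directly that $g_\D$ is complete by showing that no path of finite length reaches $\partial\D$. For $\Phi$ I would take an infinite Blaschke product $B\co\D\to\D$, regarded as a map into $S^2=\C\cup\{\infty\}$ via the standard inclusion, with $B'$ nowhere zero (so that $\Phi$ is genuinely a local biholomorphism --- a uniformly separated zero set achieves this) and with the zeros clustering at \emph{every} point of $\partial\D$, so that $B$ is as badly behaved near the boundary circle as possible. For $\psi$ I would begin with the simplest admissible choice keeping $\operatorname{Im}\psi$ bounded below and $|\psi|$ bounded above --- so that the factor $(\operatorname{Im}\psi)^2/|\psi|^2$ lies between two positive constants --- for instance a suitable affine map of $\D$ into $\Hy$; a genuinely non-constant $\psi$ will be wanted eventually for the non-flatness of Section~\ref{section:non-flat}, but for completeness alone its role can be kept marginal. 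With $\Phi^*g_{S^2}=\frac{4|B'|^2}{(1+|B|^2)^2}\,|dz|^2$, formula~(\ref{eqn:g-sigma}) becomes an explicit expression in $z\in\D$, and the freedom in prescribing the zeros of $B$ is what will furnish the infinite-dimensional family of Section~\ref{section:many}.

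For completeness I would show that every curve $\gamma\co[0,1)\to\D$ with $|\gamma(s)|\to 1$ has infinite $g_\D$-length. With $\operatorname{Im}\psi$ and $|\psi|$ controlled as above one has $g_\D\geq c\,\Phi^*g_{S^2}$ for a positive constant $c$, so it is enough that $s\mapsto\Phi(\gamma(s))$ have infinite arc length in $(S^2,g_{S^2})$. Since $S^2$ is compact, this image path either wanders forever --- and then oscillates by a fixed amount infinitely often, hence has infinite length --- or it converges to some $p\in S^2$. The first alternative is what one expects of most escaping $\gamma$, precisely because the zeros of $B$ cluster everywhere; the crux is that even when $\Phi\circ\gamma$ converges, the image path must still spiral into $p$ along an arc of infinite length. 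One would force this from the Schwarz--Pick inequality $|B'(z)|\,(1-|z|^2)\leq 1-|B(z)|^2$ together with the placement of the clustering zeros, which jointly control $\int|B'|\,|dz|$ along $\gamma$.

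The main obstacle is exactly this last point, and it is the ``delicacy'' promised after Theorem~\ref{thm:incomplete}: one must build $B$ --- keeping $B'$ zero-free throughout --- so wild near $\partial\D$ that \emph{no} curve to the boundary has image of finite Euclidean length. The tension is structural: by Schwarz--Pick, $\Phi^*g_{S^2}$ is dominated by the Poincar\'e metric of $\D$, so it can never be ``larger'' than a standard complete metric near $\partial\D$; yet an infinite Blaschke product has radial limits of modulus one at almost every boundary point, so along almost every radius $\Phi\circ\gamma$ does converge, and one must nonetheless arrange that it reaches its limit along an arc of infinite length. Producing a locally univalent inner function all of whose boundary approaches have infinite length is where the real work lies; granting such a $B$ (and absorbing the harmless contribution of a non-constant $\psi$, should one be used), completeness of $g_\D$ drops out of $g_\D\geq c\,\Phi^*g_{S^2}$ together with routine length estimates.
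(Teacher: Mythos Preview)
Your plan has a structural obstruction that makes the main step impossible. You reduce completeness of $g_\D$ to completeness of $\Phi^*g_{S^2}$ by keeping $\psi$ ``marginal'', and then propose to engineer a locally univalent Blaschke product $\Phi$ so that every escaping path has infinite spherical image-length. But $\Phi^*g_{S^2}$ has constant curvature $+1$, and a complete simply connected surface of curvature $+1$ is the round sphere; since $\D$ is not compact, $\Phi^*g_{S^2}$ can \emph{never} be complete, for any local biholomorphism $\Phi\co\D\to S^2$ whatsoever. Equivalently: completeness would make $\Phi$ a Riemannian covering onto its image $U\subset S^2$, forcing $U$ to be complete in the restricted spherical metric, hence $U=S^2$; but then $\Phi\co\D\to S^2$ is a diffeomorphism, which is absurd. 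So the ``real work'' you correctly isolate is not merely delicate but unattainable, and the term $(d\,\mathrm{Im}\,\psi)^2/|\psi|^2$ cannot be marginal --- it must carry part of the load.

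The paper exploits exactly this division of labour, and the Blaschke product enters through $\psi$, not through $\Phi$. One takes $\Phi$ to be the universal cover $\D\to S^2\setminus\{p_1,p_2,p_3\}$ arising from an ideal-triangle tessellation; then $\Phi^*g_{S^2}$ is as close to complete as the obstruction permits: every proper path in $\D$ has infinite $\Phi^*g_{S^2}$-length \emph{unless} it eventually stays in a horodisc-like region at one of the countably many cusp vertices $z_n\in\partial\D$ (Lemma~\ref{lem:longpath}). Those residual leaks are sealed by~$\psi$. A theorem of Belna--Carroll--Piranian supplies a Blaschke product $B$ with $B(z)\to 1$ as $z\to z_n$ within each such region, and one sets $\psi=\bi\sqrt{1-B}$. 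This lands $\psi(\D)$ in the sector $\{|\mathrm{Re}\,w|<\mathrm{Im}\,w\}$, so $\mathrm{Im}\,\psi/|\psi|>1/\sqrt{2}$ and your inequality $g_\D\ge c\,\Phi^*g_{S^2}$ still handles all non-cuspidal escapes; along a cuspidal escape one has $\psi\to 0$, whence $\int|d\,\mathrm{Im}\,\psi|/|\psi|$ dominates a constant times the total variation of $\log\mathrm{Im}\,\psi$, which is infinite. Thus the two summands in~(\ref{eqn:g-sigma}) are complementary, and neither suffices alone.
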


The construction of the pair $(\Phi ,\psi )$ will take up the rest
of this section.

Saying that $g_{\D}$ is complete means that any proper path
$\gamma\co [0, +\infty )\rightarrow{\D}$ has infinite length in this
metric. Intuitively, for the metric $\Phi^* g_{S^2}$ to be
complete the map $\Phi\co \D\to S^2$ would have to wrap $\D$ around $S^2$
so as to push the boundary of $\D$ infinitely far away. Such a map,
however, would have to be a covering. The $2$-sphere being simply connected,
this is impossible. Still, {\em most} of that boundary can be pushed
infinitely far away. This is achieved as follows.

Equip $\D$ with the Poincar\'e metric and consider the conformal
covering projection
\[ \Phi \co {\D}\longrightarrow  S^2\setminus\{ p_1,p_2,p_3\} \]
of the sphere with three punctures by the unit disc. Let us recall
the construction of such a covering map. Take a hyperbolic
triangle with its three vertices on $\partial{\D}$ (a so-called
{\em ideal} hyperbolic triangle),
tessellate $\D$ by this triangle and the infinitely many
images under successive reflection on the sides (Figure~\ref{figure:tess}),
then consider
the quotient ${\D}/\Gamma$ where $\Gamma$ is the group consisting of
the hyperbolic translations within the group generated by those
reflections. This quotient space ${\D}/\Gamma$ is the result of gluing
corresponding sides of
two copies of the original triangle, hence
a sphere with three punctures. As a consequence of the uniformisation
theorem, there is a biholomorphism
\[ \D /\Gamma\longrightarrow
S^2\setminus\{ p_1,p_2,p_3\} ; \]
see \cite{thur97}, in particular pp.~117 and 147--149.

\begin{figure}[h]
\centering
\includegraphics[scale=0.3]{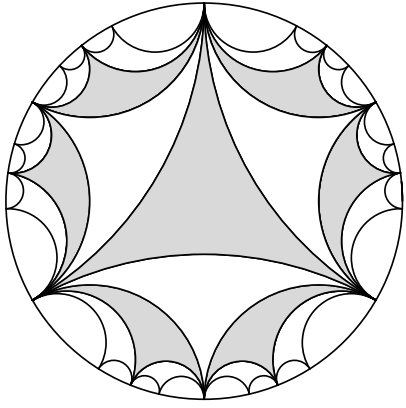}
  \caption{The hyperbolic tessellation of $\D$ corresponding to~$\Phi$.}
  \label{figure:tess}
\end{figure}

The map $\Phi$ is the composition
\[ \D\longrightarrow\D /\Gamma\longrightarrow
S^2\setminus\{ p_1,p_2,p_3\} , \]
and it has rank $2$ everywhere.

There is a distinguished sequence $(z_n)$ of points on the unit
circle $\partial{\D}$, namely the vertices of all triangles in the
tessellation. For $j=1,2,3$ choose a small closed metric ball $B_j$,
centred at the puncture $p_j$ in $S^2$, such that the balls
$2B_1,2B_2,2B_3$ of double radius have disjoint closures.
Their preimages under $\Phi$ are shown in Figure~\ref{figure:horo}.

For each $n$ let $D_n$ be the connected component of
$\Phi^{-1}(B_1\cup B_2\cup B_3)$ having $z_n$ as limit point, and
let $2D_n$ be the same for $\Phi^{-1}(2B_1\cup 2B_2\cup 2B_3)$.
Then  both $(D_n)$ and $(2D_n)$ form a sequence of pairwise
disjoint horodisc-like regions.

\begin{figure}[h]
\centering
\includegraphics[scale=0.3]{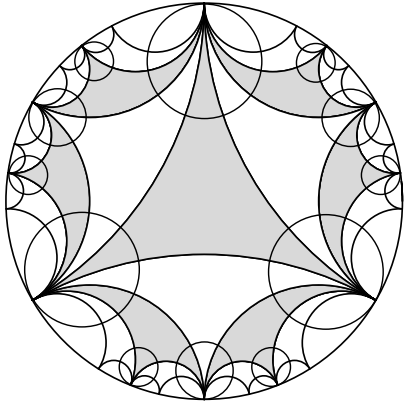}
  \caption{The hyperbolic tessellation and the horodisc-like regions.}
  \label{figure:horo}
\end{figure}

\begin{lem}
\label{lem:longpath}
Let $\gamma \co[0,+\infty )\rightarrow\D$ be any
proper path. Either $\gamma$ has infinite length in the metric
$\Phi^* g_{S^2}$, or it has an end $\gamma ([t_0,+\infty ))$
contained in some region $2D_{n_0}$.
\end{lem}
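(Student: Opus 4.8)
The plan is to argue by contradiction: suppose $\gamma\colon[0,+\infty)\to\D$ is a proper path of finite length in the metric $\Phi^*g_{S^2}$ that does \emph{not} have an end contained in any $2D_{n_0}$. Since $\gamma$ is proper, it eventually leaves every compact subset of $\D$, so its image accumulates only on $\partial\D$. The first step is to understand the geometry of $(\D,\Phi^*g_{S^2})$ near the boundary. Away from the horodisc-like regions $2D_n$, i.e.\ on $\Phi^{-1}(S^2\setminus(2B_1\cup 2B_2\cup 2B_3))$, the map $\Phi$ is a covering onto a \emph{compact} subset of $S^2$ (the sphere with three open balls removed), so $\Phi^*g_{S^2}$ is a complete metric there; in particular any path that stays in this region and is proper in $\D$ has infinite length. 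Hence a finite-length proper $\gamma$ must, from some time on, spend ``most'' of its time inside the union $\bigcup_n 2D_n$.

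The heart of the argument is then a topological/combinatorial separation statement: the regions $2D_n$ are pairwise disjoint, and each $2D_n$ is separated from the others and from the ``core'' of $\D$ by an annular collar sitting inside $2D_n\setminus D_n$ whose $\Phi^*g_{S^2}$-width is bounded below by a positive constant $c>0$ (independent of $n$), because $\Phi$ maps this collar onto the fixed annular region $2B_j\setminus B_j$, traversed some number of times, and the round metric on that annulus has a definite ``thickness''. So each time $\gamma$ enters a region $2D_n$ and later leaves it, it must cross this collar and therefore accrue length at least $c$. If $\gamma$ entered and exited the $2D_n$'s infinitely often, its total length would be infinite, contradiction. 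Therefore $\gamma$ eventually enters some single $2D_{n_0}$ and never leaves it, which is exactly the alternative claimed. I would also need to rule out the degenerate possibility that $\gamma$ oscillates between being inside $\bigcup 2D_n$ and the complement without fully crossing a collar, but properness of $\gamma$ forces the relevant crossings to actually happen: to go from accumulating at a boundary point outside all the $\overline{2D_n}$ to inside one of them requires a genuine crossing.

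The step I expect to be the main obstacle is making the collar-width bound uniform in $n$ and handling the combinatorics of how $\gamma$ can pass between the regions. One must check that although the $D_n$ and $2D_n$ are only ``horodisc-like'' rather than literal horodiscs, the preimage of the annulus $2B_j\setminus B_j$ inside each $2D_n$ still carries a lower bound on its $\Phi^*g_{S^2}$-distance from $D_n$ to $\partial(2D_n)$ that does not degenerate along the sequence $(z_n)$; this follows because $\Phi$ restricted to such a preimage component is a covering of the \emph{fixed} annulus $2B_j\setminus B_j$, and distances can only increase under passing to a cover. Once that uniform constant $c$ is in hand, the finiteness of length caps the number of full crossings, and a short argument with properness upgrades ``finitely many crossings'' to ``an end contained in a single $2D_{n_0}$''.
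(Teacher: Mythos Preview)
Your overall strategy is sound and close in spirit to the paper's, but there is a concrete slip in the collar-crossing step. You claim that ``each time $\gamma$ enters a region $2D_n$ and later leaves it, it must cross this collar and therefore accrue length at least~$c$.'' That is false as stated: the collar $2D_n\setminus D_n$ separates $D_n$ from the complement of $2D_n$, not $2D_n$ from its complement. A path can enter and exit $2D_n$ by dipping just past $\partial(2D_n)$ and back, accruing arbitrarily little length and never reaching $D_n$. The correct bookkeeping is: each time $\gamma$ is in some $D_n$ and subsequently leaves $2D_n$, it must cross the collar and pick up length at least~$c$. For this to drive the argument, your completeness step should be applied to the complement of $\bigcup_n D_n$ (still a covering of the compact set $S^2\setminus(\mathrm{int}\,B_1\cup\mathrm{int}\,B_2\cup\mathrm{int}\,B_3)$), not of $\bigcup_n 2D_n$. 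Then the trichotomy becomes clean: either $\gamma$ eventually avoids $\bigcup D_n$ (infinite length by completeness), or $\gamma$ eventually stays in $\bigcup 2D_n$ (hence in a single $2D_{n_0}$, by disjointness and connectedness), or $\gamma$ alternates infinitely often between hitting some $D_n$ and exiting the corresponding $2D_n$, each alternation costing at least~$c$. Your acknowledgement of the ``oscillation'' issue does not quite address this; properness alone does not force a dip into $2D_n$ to reach all the way to $D_n$.

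Once repaired as above, your argument is a legitimate alternative to the paper's. The paper handles the case where $\gamma$ eventually avoids $\bigcup D_n$ by an explicit combinatorial argument with the hexagonal tiles $T_k'$ (counting crossings between even sides), whereas you invoke the general fact that a Riemannian covering of a compact space is complete. That is cleaner and avoids the tessellation combinatorics. The paper then splits the remaining cases according to whether $\gamma$ visits one or several $D_n$ infinitely often, using the spherical distance between the $B_j$ in the latter case; your collar argument (once fixed) handles both of these uniformly.
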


\begin{rem}
If $\gamma$ is proper in $\D$ and contained in
$2D_{n_0}$, we must of course have $\gamma (t)\to z_{n_0}$ as $t\to
+\infty$. So the lemma says that a path of finite length in
the metric $\Phi^*g_{S^2}$ can only escape the
disc through one of the points $z_n$. Intuitively, the covering map
$\Phi$ wraps around the punctured sphere so as to push the boundary
of the disc infinitely far away, with the exception of
the countable set $\{ z_n\co n\in\N\}$.
\end{rem}

\begin{proof}
Suppose first that the path visits
$\cup_{n=1}^{\infty} D_n$ only finitely often. By deleting an
initial segment, we may then assume that the trace of $\gamma$ is disjoint
from all the~$D_n$.

Each triangle $T_k$ of the tessellation intersects exactly three of
the $D_n$. Let $T'_k$ be the compact region obtained by
removing from $T_k$ the interior of those three intersections.
With the metric $\Phi^*g_{S^2}$ the $T'_k$ are pairwise congruent
spherical regions in the shape of a hexagon, with three sides
coming from the boundaries $\Phi^{-1}(\partial B_j)$, $j=1,2,3$
(call these the odd sides), and three sides coming from the sides
of $T_k$ (call these the even sides). The path $\gamma$ is proper
and contained in the union $\cup_{k=1}^{\infty}T'_k$, therefore it
must visit infinitely many different regions $T'_k$, and among
those there must be infinitely many $T'_k$ where $\gamma$ enters
on one (necessarily even) side and exists on another (also even)
side. If $c_1>0$ is the minimum spherical distance between even
sides, which is the same for all $T'_k$, then the length of
$\gamma$ in the metric $\Phi^* g_{S^2}$ is bounded from below by
the series $c_1+c_1+\cdots$ and therefore infinite.

Suppose now that $\gamma$ visits the disjoint union
$\cup_{n=1}^{\infty} D_n$ infinitely often and there are at least
two regions $D_{n_1}$ and $D_{n_2}$ which the path visits
infinitely many times. Then the length of $\gamma$ in the metric
$\Phi^* g_{S^2}$ is bounded below by the series $c_2+c_2+\cdots$,
where $c_2$ is the minimum spherical distance between the balls
$B_1,B_2,B_3$, and hence infinite.

There remains the case when $\gamma$ visits but a single region
$D_{n_0}$ infinitely often. If it has an end $\gamma ([t_0,+\infty
))$ contained in $2D_{n_0}$, we are done. Otherwise $\gamma$ takes
infinitely many journeys from $\partial D_{n_0}$ to $\partial
(2D_{n_0})$ and the length of $\gamma$ in the metric $\Phi^*
g_{S^2}$ is bounded below by the series $c_3+c_3+\cdots$, where
$c_3$ is the radius of the metric ball $B_j=\Phi (D_{n_0})$, and
so again this length is infinite.
\end{proof}

A path in $2B_j\setminus\{ p_j\}$ may well have
finite spherical length and limit $p_j$. Thus $2D_{n_0}$ contains
paths with $z_{n_0}$ as limit and finite length in the metric
$\Phi^*g_{S^2}$. To ensure that $g_\D$ be a complete metric, we need
to choose the function $\psi$ so that the integral of $|d\,\hbox{\rm
Im}\,\psi |/|\psi |$ along such paths is infinite. Such a $\psi$ can
be found with the help of so-called Blaschke products.
For the reader's convenience we first review this notion.

\vspace{2mm}

Given $a\in{\D}$ with $a\neq 0$, let $F_a(z)$ be the
orientation-preserving isometry for the Poincar\'e metric
that exchanges $0$ with $a$. The
line segment joining 0 to $a$ is a Poincar\'e geodesic, and $F_a$ is the
$180^{\hbox{\scriptsize o}}$ Poincar\'e rotation about the
Poincar\'e midpoint of that segment (Figure~\ref{figure:rotation}).
It is given by
$\displaystyle{F_a(z)=\frac{a-z}{1-\overline{a}\, z}}$.

\begin{figure}[h]
\labellist
\small\hair 2pt
\pinlabel $0$ [br] at 207 222
\pinlabel $a$ [br] at 352 294
\endlabellist
\centering
\includegraphics[scale=0.3]{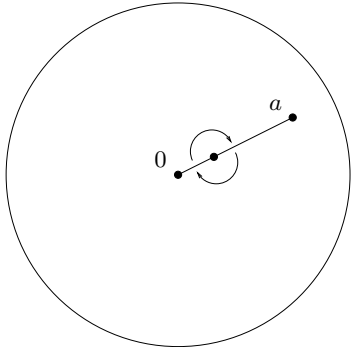}
  \caption{Poincar\'e rotation.}
  \label{figure:rotation}
\end{figure}

The {\bf Blaschke factor} $B_a(z)$ is the function
\[ B_a(z)=\frac{\overline{a}}{|a|}\,\frac{a-z}{1-\overline{a}\,
z}\, ,\]
i.e.\ the result of multiplying the isometry $F_a$ by a unitary
constant so that $B_a(0)$ is a positive real number. The purpose
of this normalisation is to get a simple convergence condition
for {\bf Blaschke products}, which are finite or infinite
products of the form
\[ B(z)=z^m\cdot\prod_k B_{a_k}(z).\]
It is well known \cite[Section 6.7]{haym64}
that under the condition $\sum_k
(1-|a_k|)<\infty$ this product converges to a holomorphic function
$B\co\D\rightarrow\D$.

\begin{thm}[{\cite[Theorem 1]{bcp83}}]
Let $(z_n)$ be a
sequence of pairwise distinct points on $\partial{\D}$. Let
$D\subset {\D}$ be a convex open domain with its boundary
interior to $\D$ except for the point $1\in\partial{\D}$.
Then there exists a Blaschke product $B\co\D\rightarrow\D$
such that for each $n$
we have a finite number $\lambda_n$ with
\[ \frac{B(z)-1}{z-z_n}\longrightarrow \lambda_n \;\;\hbox{as}\;\; z
\longrightarrow z_n,\;\; z\in z_n D:= \{\, z_n\zeta\co\zeta\in D\,\}.\qed\]
\end{thm}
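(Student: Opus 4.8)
The plan is to reduce the statement to a convergence/interpolation problem for the zeros $a_k$ of the Blaschke product, choosing them to accumulate tangentially at the points $z_n$ in a controlled way. First I would fix, for each $n$, a sequence of points inside the domain $z_nD$ approaching $z_n$; call the full collection $(a_k)$, arranged so that only finitely many $a_k$ lie outside any fixed neighbourhood of $\partial\D$ near each $z_n$. The key is to impose $\sum_k(1-|a_k|)<\infty$ so that $B(z)=\prod_k B_{a_k}(z)$ converges to a holomorphic map $\D\to\D$, as recalled just above the statement. The target value $1$ rather than $0$ is handled by first producing a Blaschke product $\widetilde B$ with $\widetilde B(z_n)=0$ in the appropriate tangential sense and then post-composing with a Möbius automorphism of $\D$ sending $0$ to a suitable boundary-adjacent configuration; alternatively, and more directly, one builds $B$ so that $1-B(z)$ has a simple ``tangential zero'' at each $z_n$.

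The central estimate is the behaviour of a single Blaschke factor $B_a(z)$ as $z\to z_n$ along $z_nD$ while $a\to z_n$. Since $D$ is convex with boundary interior to $\D$ except at the single point $1$, the set $z_nD$ is a Stolz-type approach region at $z_n$ (tangential approach is allowed only in a one-sided, controlled manner because of convexity). One computes that for $a$ close to $z_n$ and $z\in z_nD$ close to $z_n$, the factor $B_a(z)$ is close to $1$, with the defect $1-B_a(z)$ comparable to $(1-|a|)$ times a bounded quantity; summing over $k$ and using $\sum_k(1-|a_k|)<\infty$ shows $\prod_k B_{a_k}(z)\to 1$ as $z\to z_n$ in $z_nD$, uniformly enough that $(B(z)-1)/(z-z_n)$ has a finite limit $\lambda_n$. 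The ratio converges rather than merely staying bounded because the chosen $a_k$ near $z_n$ can be taken to lie essentially along the radius to $z_n$, making $1-B_{a_k}(z)$ behave like a fixed holomorphic local model in $(z-z_n)$.

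The main obstacle is the \emph{simultaneous} control at all the $z_n$: the zeros $a_k$ assigned to $z_m$ for $m\neq n$ must not spoil the limit of $(B(z)-1)/(z-z_n)$ at $z_n$, and conversely. This is where the hypothesis that the $z_n$ are pairwise distinct on $\partial\D$, together with $\sum_k(1-|a_k|)<\infty$, does the work: for $z$ in the approach region $z_nD$, the factors $B_{a_k}$ with $a_k$ clustered near some $z_m$, $m\neq n$, are uniformly bounded away from $0$ and converge to unimodular constants whose product is a nonvanishing holomorphic factor near $z_n$, so they contribute a smooth, nonzero multiplicative term and do not affect the existence of the finite limit $\lambda_n$. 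I would organize the argument by (1) constructing the zero set with a diagonal/rapidly-decaying choice of $1-|a_k|$, (2) proving the local limit at a fixed $z_n$ treating the ``near'' and ``far'' factors separately, and (3) checking the convex domain $D$ genuinely sits inside an admissible approach region so the local holomorphic model argument applies; step (2), the near-far splitting estimate, is the heart of the proof.
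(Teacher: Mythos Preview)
The paper does not prove this theorem at all: it is quoted verbatim from the cited reference \cite{bcp83} and marked with a \qed\ to signal that it is being used as a black box. So there is no ``paper's own proof'' to compare against; I can only assess whether your outline stands on its own.

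It does not, and the failure is at the very first step. You propose to choose the zeros $a_k$ of $B$ so that, for each $n$, a subsequence of the $a_k$ lies in $z_nD$ and converges to $z_n$. But $B(a_k)=0$ for every~$k$, so along that subsequence $B(z)\to 0$, not $1$; the desired limit $B(z)\to 1$ as $z\to z_n$ in $z_nD$ is therefore impossible. In fact the Julia--Carath\'eodory picture tells you the opposite of what you want to do: for $B$ to have angular limit $1$ at $z_n$ together with a finite angular derivative (which is what the limit of $(B(z)-1)/(z-z_n)$ amounts to), one needs a summability condition of the type $\sum_k(1-|a_k|^2)/|z_n-a_k|^2<\infty$, which in particular forces the zeros to stay \emph{away} from~$z_n$, not to accumulate there. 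Your fallback of post-composing with a M\"obius automorphism of $\D$ cannot rescue this either: such automorphisms send $0$ to interior points, never to the boundary value~$1$, and in any case post-composing a Blaschke product with a disc automorphism does not yield another Blaschke product (it will be zero-free).

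The actual construction in \cite{bcp83} proceeds quite differently: the zeros are placed so as to cluster on $\partial\D$ \emph{away} from the prescribed points $z_n$, in such a way that the partial products converge and the resulting $B$ has the required Fatou behaviour (value $1$ with finite angular derivative) precisely at each~$z_n$. The subtlety is arranging this for an \emph{arbitrary} countable set $\{z_n\}\subset\partial\D$ simultaneously, and that is where the real work lies.
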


We are now going to apply this theorem to the sequence $(z_n)$ of vertices
in our hyperbolic tessellation.

In the Euclidean metric on~$\D$, the size of the regions $2D_n$
is bounded above by the size of the three of those horodisc-like
regions at the vertices of the hyperbolic triangle containing
the origin $0\in\D$. Thus we
can choose a circular disc $D\subset{\D}$ whose boundary is
interior to $\D$ except for 1, and such that for each $n$ the
rotated image $z_n D$ contains~$2D_n$. By the theorem above
we have a Blaschke product $B\co\D\to\D$ such that for all $n$
\[ B(z)\longrightarrow 1 \;\;\hbox{as}\;\; z\longrightarrow
z_n,\;\; z\in 2D_n.\]

The holomorphic function $1-B$ maps $\D$ into
the open disc of radius $1$ and
centre~$1$. Moreover, for each $n$ this function has limit $0$ as
$z$ approaches $z_n$ inside $2D_n$.

The open disc of radius $1$ and centre $1$ is contained in the
right half-plane $\{ z\co\mbox{\rm Re}\, z>0\}$, on which there
is a well-defined holomorphic square root with values in the
quadrant $Q_1:=\{ z\co\hbox{arg}\, z\in (-\pi/4,\pi /4)\}$.
So there is a holomorphic function $\sqrt{1-B}\co\D\rightarrow Q_1$
mapping $\D$ into the region bounded by a half lemniscate
symmetric about the positive real axis. This function likewise has
limit $0$ as $z$ approaches $z_n$ inside $2D_n$.

Finally, we set $\psi=\bi\cdot\sqrt{1-B}$. This holomorphic function
maps $\D$ into the quadrant
\[ Q_2=\{ z\co \hbox{arg}\, z\in
(\pi/4,3\pi /4)\} = \{ u+\bi\, v\co |u|<|v|\}\subset\Hy .\]
In fact, $\psi ({\D})$ is
contained in the region bounded by a half lemniscate symmetric
about the positive imaginary axis (Figure~\ref{figure:map0}).
Moreover, $\psi (z)$ converges to $0$ as $z$ approaches $z_n$ inside $2D_n$.

\begin{figure}[h]
\labellist
\small\hair 2pt
\pinlabel $\psi$ [b] at 270 121
\pinlabel $z_n$ [tl] at 161 15
\pinlabel $2D_n$ [b] at 148 67
\pinlabel $Q_2$ at 563 164
\pinlabel {$\psi(\D )$} [b] at 468 153
\endlabellist
\centering
\includegraphics[scale=0.5]{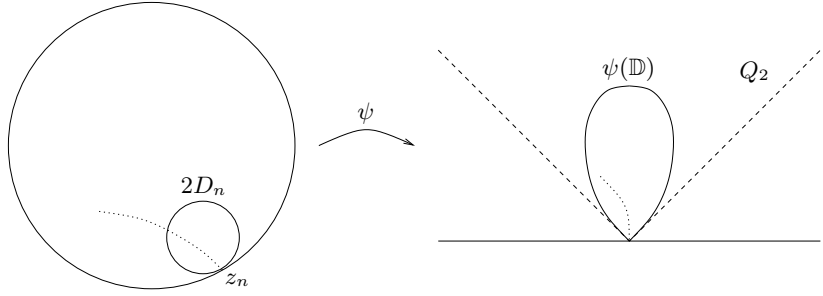}
  \caption{The map $\psi$.}
  \label{figure:map0}
\end{figure}

This finishes the construction of the pair $(\Phi ,\psi )$. We are now going
to verify that the metric $g_{\D}$ defined by this pair as in
equation~(\ref{eqn:g-sigma}) is indeed complete. This will
conclude the proof of Theorem~\ref{thm:example}.

Since $\psi$ satisfies $|\hbox{Re}\,\psi |<|\hbox{Im}\,\psi |$, we
have $|\hbox{Im}\,\psi |/|\psi |> 1/\sqrt{2}$. By
Lemma~\ref{lem:longpath}, a proper path in $\D$ is
infinitely long in the metric
$(\hbox{Im}\,\psi /|\psi |)^2\cdot\Phi^* g_{S^2}$,
hence also in the metric $g_{\D}$, unless it has an end contained in
some region $2D_{n_0}$.

But if a proper path $\gamma\co [t_0,+\infty )\rightarrow\D$
is contained in $2D_{n_0}$, then we must have
\[ \gamma (t)\longrightarrow z_{n_0}\;\;\hbox{and}\;\;\psi(\gamma (t))
\longrightarrow 0 \;\;\hbox{as}\;\; t\longrightarrow +\infty .\]
Writing $\psi(\gamma (t))=u(t)+\bi\, v(t)$, we have
\[ \int_{\gamma}\frac{|d\,\hbox{Im}\,\psi |}{|\psi |}>
\int_{t_0}^{+\infty}\frac{|v'(t)|}{\sqrt{2}\, v(t)}\, dt =
\frac{1}{\sqrt{2}}\,\bigl(\hbox{total variation of}\;\log v(t)\bigr) . \]
Since $\log v(t)\to\log 0= -\infty$ as $t\to +\infty$, we
deduce that the integral of $|d\,\hbox{Im}\,\psi |/|\psi |$ along
$\gamma$ is infinite, and so is the length of $\gamma$ in the
metric~$g_{\D}$.
\section{An infinite-dimensional family of examples}
\label{section:many}
The preceding construction goes through if we replace $\psi$ by
$\psi_\mu =\mu\circ\psi$, where $\mu$ is any holomorphic function
whose domain contains $\{ 0\}\cup\psi (\D )$ and such that $\mu
(0)=0$ and $\mu (\psi (\D ))\subset Q_2$, see Figure~\ref{figure:map1}.
With $\Phi$ the covering map described in Section~\ref{section:example},
we thus obtain an infinite-dimensional family of holomorphic data
$(\Phi ,\psi_{\mu})$ that yield slice-complete hyperk\"ahler
structures in $\D\times\R_t\times S^1_\theta$.

\begin{figure}[h]
\labellist
\small\hair 2pt
\pinlabel $\psi_{\mu}$ [b] at 270 121
\pinlabel {$\psi_{\mu}(\D )$} [b] at 487 166
\endlabellist
\centering
\includegraphics[scale=0.5]{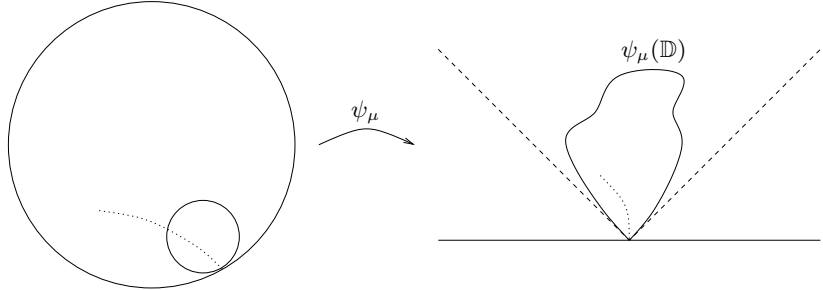}
  \caption{The map $\psi_{\mu}$.}
  \label{figure:map1}
\end{figure}

We now want to show that the family of pairs $(\Phi ,\psi_{\mu})$
gives rise to an infinite-dimensional family of slice-complete
hyperk\"ahler metrics.
Consider triples $({\mathcal H},X,Y)$ made of a hyperk\"ahler
structure ${\mathcal H}=(g,J_1,J_2,J_3)$, a vector field $X$
preserving the entire structure, and a vector field $Y$ such that
$L_Y\Omega_i=\Omega_i$ and $[X,Y]\equiv 0$. If two such triples are
isomorphic, then the image $x(S)$ of the canonical slice of $Y$ under the
momentum map is the same for both. For a triple in our family, the
image $x(S)$ is the radial graph in $\R^3$ of the multi-valued
function $(\mbox{\rm Im}\, \psi_\mu )\circ\Phi^{-1}\co S^2\rightarrow
\R^+$. As $\mu$ varies, these graphs form an infinite-dimensional family of
immersed surfaces in $\R^3$, so we have an infinite-dimensional
family of isomorphism classes of triples. Then the metrics also constitute
an infinite-dimensional family of isometry classes, because for a given
hyperk\"ahler metric $g$ the number of degrees of freedom for
$J_1,J_2,J_3,X,Y$ is bounded {\em a priori} by a finite constant: the
complex structures $J_1,J_2,J_3$ are parallel with respect to the
Riemannian connection; the Lie algebra of Killing vector fields
is finite-dimensional~\cite[Thm.~VI.3.3]{kono63}; any two
homothetic vector fields differ by a Killing field.
\section{Non-flatness}
\label{section:non-flat}
The following proposition implies, as we shall see below,
that the hyperk\"ahler metrics in our examples are non-flat.
This proposition may be regarded as a converse of~\cite[Thm.~10]{gego08}.

\begin{prop}
\label{prop:non-flat}
If a metric with a homothetic vector field is slice-complete and
flat, then the canonical slice is compact.
\end{prop}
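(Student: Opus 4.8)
The plan is to show that flatness forces the canonical slice $S$ to be (a quotient of) a round sphere, and then use slice-completeness together with the explicit form of the slice metric to conclude compactness. The setting is the one of the previous sections: a flat hyperk\"ahler metric on $W=\Sigma\times\R_t\times S^1_\theta$ arising from holomorphic data $(\Phi,\psi)$, where $\Phi\co(\Sigma,J)\to S^2$ is a local biholomorphism. First I would recall that under conditions (i) and (ii) the whole structure is governed by $\Phi$ and $\psi$, and that $\psi$ takes values in the upper half-plane $\Hy$; by Lemma~\ref{lem:slice} the slice is $S=G\times S^1_\theta$ with $G$ the graph $\{\rho=\mathrm{Im}\,\psi\}$, and by equation~(\ref{eqn:g-sigma}) the quotient metric on $S/(\text{flow of }\partial_\theta)\cong\Sigma$ is
\[ g_\Sigma=\frac{1}{|\psi|^2}\bigl[(d\,\mathrm{Im}\,\psi)^2+(\mathrm{Im}\,\psi)^2\,\Phi^*g_{S^2}\bigr]. \]

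The key step is to translate the hypothesis ``$g$ flat'' into a condition on $\psi$. I would compute the curvature of the hyperk\"ahler metric~(\ref{eqn:GH-metric}) in terms of the Gibbons--Hawking potential $V=(\mathrm{Im}\,\varphi)\rho^{-1}=\rho^{-1}/\mathrm{Im}\,\psi$ and the momentum map $x=\rho\,\Phi$. It is classical that a Gibbons--Hawking metric is flat precisely when the potential $V$ is (locally) an affine function of the Euclidean coordinates $x=(x_1,x_2,x_3)$ on the orbit space of $\partial_\theta$; equivalently $V = \alpha + \langle \beta, x\rangle$ for constants $\alpha\in\R$, $\beta\in\R^3$ (up to the multi-valuedness inherent in the non-schlicht domain). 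Since here $x=\rho\,\Phi$ and $V\rho = 1/\mathrm{Im}\,\psi$, this reads $1/\mathrm{Im}\,\psi = \alpha\rho + \langle\beta,\Phi\rangle$ on $\Sigma$ (the $\rho$-dependence must cancel because $V\rho$ is homogeneous of degree $0$, forcing $\alpha=0$), so
\[ \frac{1}{\mathrm{Im}\,\psi} = \langle\beta,\Phi\rangle \qquad\text{on }\Sigma \]
for some fixed $\beta\in\R^3$. (One should check the sign: positivity of $V$ forces $\langle\beta,\Phi\rangle>0$, so $\Phi$ lands in the open hemisphere $\{\langle\beta,\cdot\rangle>0\}\subset S^2$.) This is the heart of the argument and, I expect, the main obstacle: carefully deriving that flatness of~(\ref{eqn:GH-metric}) is equivalent to affine-linearity of $V$, and then extracting the precise relation above from homogeneity, will require the most care.

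Granting that relation, I would finish as follows. Substitute $\mathrm{Im}\,\psi = 1/\langle\beta,\Phi\rangle$ into $g_\Sigma$. Writing $f:=\langle\beta,\Phi\rangle\co\Sigma\to\R^+$ and noting $|\psi|^2 = (\mathrm{Re}\,\psi)^2 + f^{-2}$, a short computation shows that $g_\Sigma$ becomes the pullback under $\Phi$ of an explicit metric on the hemisphere $H=\{\langle\beta,\cdot\rangle>0\}\subset S^2$ — indeed, $d\,\mathrm{Im}\,\psi = -f^{-2}\,df$ and the bracket in~(\ref{eqn:g-sigma}) equals $f^{-4}\,df^2 + f^{-2}\,\Phi^*g_{S^2}$, and this is precisely $\Phi^*$ of the round metric on $H$ written in the coordinate $f$ plus the angular part (up to the overall conformal factor $|\psi|^{-2}$, which is bounded). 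Crucially, $g_\Sigma$ is then conformal to $\Phi^*g_{S^2}$ with a conformal factor $\mathrm{Im}\,\psi/|\psi|$ that is bounded above and bounded below away from zero on $\Phi^{-1}(\overline{H'})$ for any hemisphere $H'\Subset H$; more to the point, $(\Sigma,g_\Sigma)$ is complete (slice-completeness, via the discussion opening Section~\ref{section:example} that $g_3$ is complete iff $g_\Sigma$ is), and a complete metric conformal to the pullback of a spherical metric on a hemisphere via a local biholomorphism $\Phi$ forces $\Phi$ to be a covering onto its image. Since the hemisphere is simply connected, $\Phi$ is then a biholomorphism of $\Sigma$ onto $H$ (or onto all of $S^2$ in a degenerate case), and completeness of $g_\Sigma$ then forces $\mathrm{Im}\,\psi\to 0$, i.e.\ $f\to\infty$, to be unattainable — so $\Phi(\Sigma)$ must be a closed subset of $S^2$, hence all of $S^2$, and $\Sigma\cong S^2$ is compact. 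Therefore $S=G\times S^1_\theta$ is compact. I would present the conformal-to-spherical completeness argument via the standard fact that a complete Riemannian surface whose metric is locally isometric (here: conformally, with controlled factor) to an open subset of $S^2$ through a local isometry is a covering of that subset, and then invoke simple connectivity of hemispheres and of $S^2$.
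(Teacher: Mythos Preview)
There are two genuine gaps in your proposal.

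\textbf{Scope.} The proposition is stated for an arbitrary Riemannian metric with a homothetic vector field; nothing in its hypotheses mentions hyperk\"ahler structure, the $S^1$-action, the Gibbons--Hawking ansatz, or holomorphic data. You immediately restrict to the special set-up of Sections~\ref{section:GH}--\ref{section:canonical}, so even if the rest of the argument were correct you would only be proving the proposition in that particular case. The paper's proof, by contrast, works in full generality and in any dimension: it uses nothing more than the developing map $F\co\widetilde{U}\to\E^n$ of a flat metric, observes that $F$ carries the (lifted) homothetic field to a homothetic field $Y_0$ of Euclidean space, notes that the canonical slice of $Y_0$ is an ellipsoid because Euclidean homotheties are affine, and then uses completeness of $\widetilde{S}$ to conclude that $F|_{\widetilde{S}}$ is a covering of that ellipsoid, hence $\widetilde{S}\cong S^{n-1}$.

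\textbf{Flatness criterion.} Your ``classical'' claim that a Gibbons--Hawking metric is flat if and only if $V$ is an affine function of $(x_1,x_2,x_3)$ is false in both directions. The single-centre potential $V=\tfrac{1}{2}|x|^{-1}$ yields the flat metric on $\R^4$ (this is the standard Gibbons--Hawking description of $\R^4$ via the Hopf fibration), yet $V$ is certainly not affine. Conversely, a non-constant affine potential such as $V=x_1$ gives a non-flat hyperk\"ahler metric. So the step on which your whole argument hinges --- deducing $1/\mathrm{Im}\,\psi=\langle\beta,\Phi\rangle$ from flatness --- has no foundation. (There is also an algebraic slip: $\mathrm{Im}\,\varphi$ equals $|\varphi|^{2}\,\mathrm{Im}\,\psi$, not $1/\mathrm{Im}\,\psi$, so the formula $V\rho=1/\mathrm{Im}\,\psi$ is incorrect; and note that an affine $V$ can never be homogeneous of degree~$-1$ anyway, which would make your chain of equalities collapse to $V\equiv 0$.)

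In short, the route via an explicit curvature identity for Gibbons--Hawking metrics is both too narrow for the stated proposition and rests on a mistaken identity. The developing-map argument in the paper avoids all of this: it needs no curvature computation beyond the existence of $F$, and it proves exactly the general statement.
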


\begin{proof}
Consider a tubular neighbourhood $U$ of the canonical slice $S$. Endow the
universal cover $\widetilde{U}$ with the lifted flat metric.
The lift of the homothetic vector field to $\widetilde{U}$ is a vector field
$\widetilde{Y}$ homothetic for the lifted metric, and its canonical
slice is the inverse image $\widetilde{S}$ of $S\subset U$ under the
covering map $\widetilde{U}\to U$. This $\widetilde{S}$ is connected,
because its tubular neighbourhood $\widetilde{U}$ is connected.
We conclude that $\widetilde{S}$ is
a universal cover of $S$ under the restricted projection
$\widetilde{S}\to S$, which is a local isometry for the induced
metrics. Since $S$ is complete with the induced metric, so
is~$\widetilde{S}$.

There is a local isometry $F\co \widetilde{U}\to\E^n$ into Euclidean
space. If $U'\subset\widetilde{U}$ is an
open domain on which $F$ is injective, then on the image $F(U')$ we
have the vector field $F_*\widetilde{Y}$ that is homothetic for the
Euclidean metric of $\E^n$ restricted to $F(U')$. Then
there is a vector field $Y_0$, defined on all of $\E^n$ and
homothetic for the Euclidean metric, that coincides with
$F_*\widetilde{Y}$ on $F(U')$. On $\widetilde{U}$ we have the
homothetic vector fields $\widetilde{Y}$ and $F^*Y_0$, and they have
to be identical because they coincide on~$U'$. This means that the
local isometry $F$ sends $\widetilde{Y}$ to $Y_0$, hence it maps the
canonical slice $\widetilde{S}$ of $\widetilde{Y}$ to the canonical
slice $S_0$ of $Y_0$. Since $\widetilde{S}$ is complete, the
restriction $F\co\widetilde{S}\to S_0$ must be a covering. But
homothetic vector fields in $\E^n$ are linear, and $S_0$ is
thus an ellipsoid. This forces $\widetilde{S}$ to be diffeomorphic
with $S^{n-1}$ and in particular compact. {\em A fortiori}, the slice $S$
must be compact.
\end{proof}

Hyperk\"ahler metrics are Ricci flat~\cite[p.~284]{bess87}, and thus
in particular Einstein metrics.
Regularity results of DeTurck--Kazdan~\cite{deka81},
cf.~\cite[Sections~5.E--F]{bess87}, say that Einstein metrics
(in dimension at least~$3$) are real analytic in harmonic
coordinates (the reason being that the equations for an Einstein metric
form a quasi-linear elliptic system in such coordinates).
Thus, if such a metric is flat in some domain, then it
must be flat everywhere. With Proposition~\ref{prop:non-flat}
we conclude that the hyperk\"ahler
metrics described in Sections \ref{section:example} and~\ref{section:many}
are non-flat on every open set, because the canonical slice of
each of these metrics is complete but non-compact.

According to \cite[Prop.~34]{gego08}, $4$-dimensional
hyperk\"ahler metrics that are cone metrics are necessarily flat,
so the metrics constructed above are not
cone metrics in any open set, i.e.\ none of their homothetic vector
fields are hypersurface orthogonal, not even locally.
\section{A Carnot--Carath\'eodory phenomenon}
\label{section:CC}
In this section we show how the examples constructed in Sections
\ref{section:example} and~\ref{section:many} lead in a natural way to
incomplete Riemannian metrics that nonetheless induce complete
Carnot--Carath\'eodory distances. We begin by explaining these
concepts.

A $1$-form $\alpha$ on a $3$-manifold $M$ is a {\bf contact form} if
$d(e^t\alpha )$ is a symplectic form on $M\times\R_t$. This is
equivalent to $\alpha\wedge d\alpha$ being a volume form on~$M$. The pair
$(M,\alpha )$ is then called a {\bf contact manifold}.

It is well known~\cite[Section~3.3]{geig08}
that any two points on a contact manifold
$(M,\alpha )$ can be joined by a {\bf Legendre path}, i.e.\ a path
$\gamma (s)$ such that $\alpha (\gamma'(s))\equiv 0$. Given a Riemannian
metric $g$ on~$M$, the induced {\bf Carnot--Carath\'eodory distance} is, for
each pair of points $p_1,p_2\in M$, the infimum of the lengths of
Legendre paths joining $p_1$ to~$p_2$, see \cite{mont02}
or~\cite{grom96}. Notice that this is bounded below by the
Riemannian distance, hence it is certainly a complete distance if
$g$ happens to be complete.

A triple of contact forms $(\alpha_1,\alpha_2,\alpha_3)$ on $M$ is
called a {\bf contact sphere} if for each ${\bf
c}=(c_1,c_2,c_3)\in\R^3\setminus \{ 0\}$ the linear combination
$\alpha_{\bf c}=c_1\alpha_1+c_2\alpha_2+c_3\alpha_3$ is a contact
form. The contact sphere is called {\bf taut} if all volume forms
$\alpha_{\bf c}\wedge d\alpha_{\bf c}$ with $|{\bf c}|=1$ are
equal, see~\cite{gego95}.
Multiplying the three $1$-forms of a taut contact sphere by the same
non-vanishing function $w$ gives a new  taut contact sphere, thanks to
the identity $w\alpha\wedge d(w\alpha )=w^2\,\alpha\wedge d\alpha$.

We now recall from \cite{gego08}
a correspondence between hyperk\"ahler structures
on $M\times\R_t$, satisfying condition (ii) of
Section~\ref{section:GH}, and taut contact spheres on~$M$. Given the
hyperk\"ahler structure, consider the symplectic forms
$\Omega_{\bf c}=c_1\,\Omega_1+c_2\,\Omega_2+c_3\,\Omega_3$ for
${\bf c}\in S^2$. We know that the volume form $\Omega_{\bf c}^2$
is the same for all unitary~$\bf c$. On the other hand, the equation
$L_{\partial_t}\Omega_{\bf c}=\Omega_{\bf c}$ is equivalent to
$\Omega_{\bf c}=d(\partial_t\ip\Omega_{\bf c})$, and this implies
\[ \partial_t\ip (\Omega_{\bf c}^2)=2\, (\partial_t\ip\Omega_{\bf c})\wedge
d(\partial_t\ip\Omega_{\bf c}) .\] Since the $3$-form
$\partial_t\ip (\Omega_{\bf c}^2)$ does not depend on $\bf c$, the
family $\bigl(
\partial_t\ip\Omega_{\bf c}\bigr)_{{\bf c}\in S^2}$ induces
a taut contact sphere on any transversal for~$\partial_t$. If
$(\alpha_1,\alpha_2,\alpha_3)$ is the contact sphere induced on the
transversal $\{ t=0\}$, the conditions
$L_{\partial_t}(\partial_t\ip\Omega_i)=\partial_t\ip\Omega_i$ and
$(\partial_t\ip\Omega_i)(\partial_t)=0$ lead to the expressions
\[ \partial_t\ip\Omega_i =e^t\alpha_i,\quad i=1,2,3,\]
where the $\alpha_i$ have been pulled back to $M\times\R_t$. Then
also $\Omega_i=d(e^t\alpha_i)$.

Conversely,
if we are given a taut contact sphere $(\alpha_1,\alpha_2,\alpha_3)$
on $M$, then --- subject only to the sign condition
$(\alpha_1\wedge d\alpha_1)/(\alpha_1\wedge\alpha_2\wedge\alpha_3)>0$,
which will be satisfied after a suitable permutation of the~$\alpha_i$ ---
the symplectic forms $\Omega_i=d(e^t\alpha_i)$ determine a
hyperk\"ahler structure on $M\times\R_t$ that obviously satisfies
condition (ii) of Section~\ref{section:GH}. These two
processes, passing from a hyperk\"ahler structure to a taut contact sphere
and vice versa, are inverses of each other.

Now we can define the natural metric mentioned at the beginning
of this section.

\begin{defn}
For a hyperk\"ahler structure satisfying condition (ii) of
Section~\ref{section:GH}, we write $(\omega_1,\omega_2,\omega_3)$
for the taut contact sphere induced by the $1$-forms $\partial_t\ip\Omega_i$
on the canonical slice and call $g_s:=\omega_1^2+\omega_2^2+\omega_3^2$
the {\bf short metric} on the canonical slice.
\end{defn}

Here is how the short metric $g_s$ and the canonical metric $g_3$ defined
in Section~\ref{section:canonical} are related. Any
taut contact sphere $(\alpha_1,\alpha_2,\alpha_3)$ on a
$3$-manifold satisfies the following structure equations, where
$(i,j,k)$ ranges over the cyclic permutations of $(1,2,3)$,
with $\beta_0$ a unique $1$-form and $\Lambda_0$ a unique function:
\begin{equation}
\label{eqn:structure}
d\alpha_i=\beta_0\wedge\alpha_i+\Lambda_0\,\alpha_j\wedge\alpha_k.
\end{equation}
If we have a hyperk\"ahler structure satisfying (ii) on $M\times\R_t$, and
$(\alpha_1,\alpha_2,\alpha_3)$ is the contact sphere induced by the
$1$-forms $\partial_t\ip\Omega_i$ on the transversal $\{ t=0\}$,
then the function $\Lambda_0$ from the structure equations is positive
and we have the formul\ae\
\[ \Omega_i=d(e^t\alpha_i)=e^t\bigl( \Lambda_0^{-1/2}(dt+\beta_0)\wedge
\Lambda_0^{1/2}\alpha_i+ \Lambda_0^{1/2}\alpha_j\wedge \Lambda_0^{1/2}
\alpha_k\bigl) .\]
It follows that the hyperk\"ahler metric is given by
\begin{equation}
\label{eqn:metric}
g=e^t\bigl( \Lambda_0^{-1}\cdot (dt+\beta_0 )^2+\Lambda_0\cdot
(\alpha_1^2+\alpha_2^2+\alpha_3^2)\bigr) .
\end{equation}
The canonical slice is thus given by the equation $t=\log\Lambda_0$.

The taut contact sphere $(\omega_1,\omega_2,\omega_3)$ induced on the
canonical slice $S$ is given by $\omega_i=e^t\alpha_i|_{TS}$.
With $\beta:=(dt+\beta_0)|_{TS}$ we have
\[ g_3:=g|_{TS}=\beta^2+\omega_1^2+\omega_2^2+\omega_3^2.\]
Clearly, $g_s$ is shorter than~$g_3$. The short metric
is incomplete in all the examples described in Sections
\ref{section:example} and~\ref{section:many}, because of the following result.

\begin{thm}[{\cite[Theorem 24]{gego08}}]
\label{thm:incomplete}
If a hyperk\"ahler structure on $\Sigma\times\R_t\times S^1_\theta$
satisfies conditions (i) and (ii) of Section~\ref{section:GH} and the
canonical slice is non-compact, then the short metric is incomplete.\qed
\end{thm}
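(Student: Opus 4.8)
The plan is to produce, inside the canonical slice, a proper path of finite length for the short metric. Since the hyperk\"ahler structure satisfies conditions (i) and~(ii), the analysis preceding Theorem~\ref{thm:GH} shows that it comes from holomorphic data $(\Phi,\varphi)$, with $\Phi\co(\Sigma,J)\to S^2$ a local biholomorphism and $\varphi\co\Sigma\to\Hy$ holomorphic; put $\psi=-1/\varphi$, which is again $\Hy$-valued. By Lemma~\ref{lem:slice} the canonical slice is $S=G\times S^1_\theta$ with $G$ a graph over $\Sigma$, so that $S$ is a circle bundle over $G\cong\Sigma$ and is non-compact exactly when $\Sigma$ is.

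The decisive step is the explicit formula
\[
 g_s=(\mathrm{Im}\,\psi)^2\,(d\theta+\xi)^2+\frac{(\mathrm{Im}\,\psi)^2}{|\psi|^2}\,\Phi^*g_{S^2}
\]
for the short metric on $S$. I would derive it by substituting $\eta(\partial_t)=\mathrm{Re}\,\varphi$ and $\rho V=\mathrm{Im}\,\varphi$ from Theorem~\ref{thm:GH} into the expressions $\omega_i=e^t\alpha_i|_{TS}$, where $e^t\alpha_i=\partial_t\ip\Omega_i$ and the $\Omega_i$ are given by~(\ref{eqn:GH-forms}). Expanding $\sum_i(\partial_t\ip\Omega_i)^2$ and using the cyclic identities $\sum_ix_i\,dx_i=\rho\,d\rho$, $\sum_ix_i(x_j\,dx_k-x_k\,dx_j)=0$ and $\sum_i(x_j\,dx_k-x_k\,dx_j)^2=\rho^4\,\Phi^*g_{S^2}$, the relation $(\mathrm{Re}\,\varphi)^2+(\rho V)^2=|\varphi|^2$, and $\eta=(\mathrm{Re}\,\varphi)\,\frac{d\rho}{\rho}+\xi$, one obtains $\sum_i(e^t\alpha_i)^2=\rho^2(d\theta+\xi)^2+\rho^2|\varphi|^2\,\Phi^*g_{S^2}$ on $W$; restricting to $S$, where $\rho=\mathrm{Im}\,\psi$ and $|\varphi|=1/|\psi|$, yields the formula above. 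I expect this computation -- elementary but somewhat lengthy -- to be the main obstacle of the proof. It is worth comparing the outcome with formula~(\ref{eqn:g-sigma}): $g_s$ has the same ``angular'' term $\frac{(\mathrm{Im}\,\psi)^2}{|\psi|^2}\Phi^*g_{S^2}$ as $g_\Sigma$, but its ``radial'' term $(d\,\mathrm{Im}\,\psi)^2/|\psi|^2$ has been replaced by the term $(\mathrm{Im}\,\psi)^2(d\theta+\xi)^2$ along $\partial_\theta$; it is exactly the loss of that radial term which breaks completeness.

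From the formula, $\partial_\theta\ip g_s=(\mathrm{Im}\,\psi)^2(d\theta+\xi)$, so the everywhere non-null vector field $\partial_\theta$ has $g_s$-orthogonal complement $\mathcal H=\ker(d\theta+\xi)$, on which $g_s$ restricts to $\frac{(\mathrm{Im}\,\psi)^2}{|\psi|^2}\,\Phi^*g_{S^2}\le\Phi^*g_{S^2}$, since $\mathrm{Im}\,\psi\le|\psi|$. Next I would observe that the metric $\Phi^*g_{S^2}$ on $\Sigma$ is incomplete: $\Phi$ is a local isometry of $(\Sigma,\Phi^*g_{S^2})$ onto an open subset of $(S^2,g_{S^2})$, so if this metric were complete $\Phi$ would be a covering map onto $S^2$ (see, e.g., \cite{kono63}), hence a diffeomorphism since $S^2$ is simply connected, contradicting the non-compactness of $\Sigma$. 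Therefore there is an inextensible unit-speed geodesic $\gamma\co[0,b)\to\Sigma$, $b<\infty$, which leaves every compact subset of $\Sigma$ and has $\Phi^*g_{S^2}$-length $b$. I would then lift $\gamma$ to the path $\widetilde\gamma\co[0,b)\to S$ horizontal for the connection $\ker(d\theta+\xi)$, i.e.\ with $(d\theta+\xi)(\dot{\widetilde\gamma})\equiv0$ and $\widetilde\gamma$ projecting to $\gamma$; this lift is defined on all of $[0,b)$ because the fibre $S^1_\theta$ is compact. Along $\widetilde\gamma$ we have $g_s(\dot{\widetilde\gamma},\dot{\widetilde\gamma})=\frac{(\mathrm{Im}\,\psi)^2}{|\psi|^2}\,\Phi^*g_{S^2}(\dot\gamma,\dot\gamma)\le1$, so $\widetilde\gamma$ has finite $g_s$-length; and $\widetilde\gamma$ is proper in $S$, because the bundle projection $S\to\Sigma$ is proper and $\gamma$ does not remain in any compact subset of $\Sigma$. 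Thus $g_s$ carries a proper path of finite length and is therefore incomplete, which is the assertion of the theorem.
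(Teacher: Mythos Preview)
The paper does not actually prove this theorem: it is quoted verbatim from \cite[Theorem~24]{gego08} and closed with a \qed, so there is no in-paper argument to compare your proposal against.

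That said, your proof stands on its own. The formula
\[
g_s=(\mathrm{Im}\,\psi)^2\,(d\theta+\xi)^2+\frac{(\mathrm{Im}\,\psi)^2}{|\psi|^2}\,\Phi^*g_{S^2}
\]
is correct: expanding $\sum_i(\partial_t\ip\Omega_i)^2$ from (\ref{eqn:GH-forms}) with $\partial_t x_i=x_i$, the cross terms cancel exactly as you indicate once one substitutes $\eta=(\mathrm{Re}\,\varphi)\frac{d\rho}{\rho}+\xi$, leaving $\rho^2(d\theta+\xi)^2+|\varphi|^2\rho^2\,\Phi^*g_{S^2}$ on $W$; restriction to $S$ via Lemma~\ref{lem:slice} then gives the displayed expression. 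The remainder of the argument is clean: $(\mathrm{Im}\,\psi)^2/|\psi|^2\le 1$ gives $g_s|_{\ker(d\theta+\xi)}\le\Phi^*g_{S^2}$; the incompleteness of $\Phi^*g_{S^2}$ on the non-compact $\Sigma$ follows from the standard fact that a local isometry with complete domain is a covering of its connected target, which would force $\Sigma\cong S^2$; and the horizontal lift of a proper finite-length path in $\Sigma$ through the $S^1$-bundle $S\to\Sigma$ (compact fibres, so lifts exist globally) yields a proper path of finite $g_s$-length. One small wording issue: rather than an ``inextensible geodesic that leaves every compact,'' it is safest to invoke Hopf--Rinow to obtain a divergent (proper) curve of finite $\Phi^*g_{S^2}$-length directly; the geodesic formulation requires the extra observation that a maximal unit-speed geodesic of finite length is Cauchy and hence proper. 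With that caveat, the argument is complete.
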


From the definitions of $\omega_i$ we have $d\omega_i=d(e^t\alpha_i)|_{TS}$.
A straightforward computation yields the following structure
equations for the taut contact sphere $(\omega_1,\omega_2,\omega_3)$
on the canonical slice:
\begin{equation}
\label{eqn:structure-omega}
d\omega_i=\beta\wedge\omega_i+\omega_j\wedge\omega_k.
\end{equation}
These structure equations can be used to give an alternative definition
of the $1$-form~$\beta$.

The next proposition is the last ingredient we need in order to display the
announced Carnot--Carath\'eodory phenomenon.

\begin{prop}
\label{prop:CC}
Given a Riemann surface $(\Sigma ,J)$ and holomorphic
data $(\Phi ,\varphi )$, consider the corresponding
hyperk\"ahler structure on $W=\Sigma\times\R_t\times S^1_\theta$, as
explained in Section~\ref{section:GH}. Let
$(\omega_1,\omega_2,\omega_3)$ be the taut contact sphere induced on
the canonical slice~$S$, and let $\beta$ be the $1$-form on $S$
defined by the structure equations~(\ref{eqn:structure-omega}). If $\varphi$
is non-constant, then $\beta$ vanishes only along a discrete set of
orbits of the $S^1$-action on $S$, and it is a contact form in the rest
of~$S$, defining the opposite orientation to that defined
by the contact forms~$\omega_i$.
Therefore, any two points on $S$ can be joined by a path
tangent to $\ker\beta$.
\end{prop}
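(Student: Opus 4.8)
The plan is to work locally on $S$ using the holomorphic coordinate description from Section~\ref{section:GH} and to compute $\beta$ explicitly in terms of the holomorphic datum $\varphi$ (equivalently $\psi=-1/\varphi$). First I would express the canonical slice $S$ via Lemma~\ref{lem:slice} as $G\times S^1_\theta$ with $G$ the graph $\rho=\operatorname{Im}\psi$, and recall that $\omega_i=e^t\alpha_i|_{TS}$ where $\alpha_i$ come from $\partial_t\ip\Omega_i$. Since the contact sphere $(\omega_1,\omega_2,\omega_3)$ satisfies the structure equations~(\ref{eqn:structure-omega}), the $1$-form $\beta$ is characterized by $d\omega_i=\beta\wedge\omega_i+\omega_j\wedge\omega_k$; so the task is to identify $\beta$ concretely and then compute $\beta\wedge d\beta$.

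The key computation is to obtain a formula for $\beta$ in terms of the momentum data. Using the Gibbons--Hawking expressions~(\ref{eqn:GH-forms}), one has $\partial_t\ip\Omega_i = \eta(\partial_t)\,dx_i - x_i\,\eta + \text{(terms in }dx_j,dx_k)$, and restricting to the canonical slice together with the relations $V=|\varphi|^2$, $\rho=\operatorname{Im}\psi$ from Lemma~\ref{lem:slice}, I would extract $\beta$. I expect $\beta$ to be, up to a positive factor, the pullback under $\Phi$ of a standard $1$-form on $S^2$ twisted by the logarithmic differential $d(\operatorname{Re}\psi/\operatorname{Im}\psi)$ or something equivalent, so that $\beta$ encodes exactly the ``angular'' part of $\psi$. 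The crucial point is that $\beta$ is $\partial_\theta$-invariant and annihilates $\partial_\theta$, hence descends to a $1$-form $\bar\beta$ on the quotient surface $G/(\text{flow of }\partial_\theta)\cong\Sigma$; the contact condition $\beta\wedge d\beta\neq 0$ on the $3$-manifold $S$ is then equivalent to $\bar\beta\wedge(\text{connection curvature})$ being nonzero, i.e.\ to $d\bar\beta$ being nonzero together with $\bar\beta$ nonvanishing. Since $\varphi$ is non-constant, $\psi$ is a non-constant holomorphic map, and the zero set of the relevant holomorphic/harmonic expression governing $\bar\beta$ (and of $d\bar\beta$) is discrete in $\Sigma$; pulling this back to $S$ via the $S^1$-bundle $S\to\Sigma$ gives a discrete set of $S^1$-orbits where $\beta$ degenerates, and $\beta$ is contact elsewhere.

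For the orientation claim, I would compute the sign of $\beta\wedge d\beta$ relative to $\omega_i\wedge d\omega_i$ directly from~(\ref{eqn:structure-omega}): wedging $d\omega_i=\beta\wedge\omega_i+\omega_j\wedge\omega_k$ with $\omega_i$ gives $\omega_i\wedge d\omega_i=\omega_i\wedge\omega_j\wedge\omega_k=:\mu$, the common (positive) volume form of the taut contact sphere, while a short manipulation of the three structure equations expresses $\beta\wedge d\beta$ as a negative multiple of $\mu$ away from the degeneracy locus — the minus sign coming from the fact that $d\beta$ is, up to sign, the curvature of the circle bundle and appears with the ``wrong'' orientation relative to the $\omega_j\wedge\omega_k$ terms. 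This is where I expect the main obstacle: getting the sign right requires carefully tracking the orientation conventions (the orientation of $S^2$ as boundary of the $3$-ball, the orientation of $\Sigma$ via $\Phi$, and the induced orientation on $S$), and confirming that the relevant scalar is \emph{strictly} negative and not merely nonpositive. Once $\beta$ is a contact form on the complement of a discrete union of $S^1$-orbits, the final assertion — that any two points of $S$ can be joined by a path tangent to $\ker\beta$ — follows: on the dense open contact region this is the standard Legendre-path connectivity cited in~\cite[Section~3.3]{geig08}, and since removing a discrete set of orbits (a set of codimension $2$) does not disconnect $S$ nor obstruct approximating paths through it, connectivity by $\ker\beta$-tangent paths persists on all of $S$.
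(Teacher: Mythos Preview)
Your proposal contains a genuine error. You claim that $\beta$ annihilates $\partial_\theta$ and therefore descends to a $1$-form $\bar\beta$ on $\Sigma$. This is false: the paper computes (Lemma~\ref{lem:psi}) that $\beta(\partial_\theta)=-\mathrm{Re}\,\psi$, which is generically nonzero. In fact your claim is incompatible with the conclusion you are trying to reach: if $\beta$ were $\partial_\theta$-invariant and satisfied $\beta(\partial_\theta)=0$, then $\beta$ would be basic for the projection $S\to\Sigma$, hence so would $d\beta$, and $\beta\wedge d\beta$ would be the pullback of a $3$-form from a surface, i.e.\ identically zero. So the very feature you call ``the crucial point'' would prevent $\beta$ from ever being a contact form. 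Your subsequent reformulation of the contact condition as ``$\bar\beta\wedge(\text{connection curvature})\neq 0$'' does not rescue this, since a basic $\beta$ wedged with any $2$-form on $S$ still yields a $3$-form that vanishes on $\partial_\theta$.

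The paper's proof runs differently and, for the contact claim, is much simpler than you anticipate. One does not need any explicit formula for $\beta$ to get $\beta\wedge d\beta$: writing $\beta=b_1\omega_1+b_2\omega_2+b_3\omega_3$ and applying $d$ to the structure equations~(\ref{eqn:structure-omega}) yields $d\beta\wedge\omega_i+\beta\wedge\omega_j\wedge\omega_k=0$, from which $d\beta=-\sum_i b_i\,\omega_j\wedge\omega_k$ and hence $\beta\wedge d\beta=-(b_1^2+b_2^2+b_3^2)\,\omega_1\wedge\omega_2\wedge\omega_3$. This gives both the contact property (wherever $\beta\neq 0$) and the opposite orientation in one stroke, with no sign-tracking of orientation conventions needed. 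The explicit formula $\beta=d\log\mathrm{Im}\,\psi-(\mathrm{Re}\,\psi)(d\theta+\xi)$ is obtained separately, by comparing the two expressions~(\ref{eqn:GH-metric}) and~(\ref{eqn:metric}) for $g$ and solving a small linear system; it is used only to identify the vanishing locus of $\beta$ as the circles over the discrete set $\{d\,\mathrm{Im}\,\psi=0\}\cap\{\mathrm{Re}\,\psi=0\}\subset\Sigma$.
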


The proof of this proposition is given in the appendix.  We can use
the paths tangent to $\ker\beta$ to define Carnot--Carath\'eodory
distances on $S$. The one induced by $g_3$ is a complete distance,
because $g_3$ is a complete metric. Now the relation
$g_3=g_s+\beta^2$ tells us that the {\em incomplete} metric $g_s$
coincides with $g_3$ on those paths and thus induces exactly the
same Carnot--Carath\'eodory distance as~$g_3$. So we have an incomplete
metric $g_s$ inducing a complete Carnot--Carath\'eodory distance. The
geometric interpretation of this fact is that the proper paths
tangent to $\ker\beta$ are so wrinkled that they always have
infinite length in the incomplete metric~$g_s$. This phenomenon is
interesting because such paths can be arbitrarily $C^0$-close to any
given path --- so $g_s$ is, in some sense, very close to being
complete.
\section*{Appendix}
Here we prove Proposition~\ref{prop:CC}. We first derive an
explicit formula for $\beta$, from which all the properties
claimed in Proposition~\ref{prop:CC} can then be deduced.

In analogy to Section~\ref{section:GH}, we define the momentum map
$x=(x_1,x_2,x_3)\co W\to\R^3\,$ by
\[ x_i=\Omega_i (\partial_\theta
,\partial_t)=-e^t\alpha_i(\partial_\theta ) ,\]
and we define $\rho\co S\to\R$ by $\rho =|x|$.

Observe that $x$ is $\partial_{\theta}$-invariant, which allowed us
in Section~\ref{section:GH} to view it as a function
on $\Sigma\times\R_t$. Below, however, we want to consider
the $x_i$ as functions on~$S$, where they equal
$-\omega_i(\partial_{\theta})$. On $TS$ we then have
the identity $dx_i=\partial_{\theta}\ip d\omega_i$.

We also regard the functions $\varphi$
and $\psi =-1/\varphi$ as functions on the canonical slice~$S$, by
first pulling them back to $\Sigma\times\R_t\times S^1_\theta$  and
then restricting them to~$S$.

\begin{lem}
\label{lem:psi}
On $S$ we have the identity $\psi =-\beta (\partial_\theta )+\bi\rho$.
\end{lem}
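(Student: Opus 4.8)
The plan is to compute $\beta(\partial_\theta)$ directly from the structure equations~(\ref{eqn:structure-omega}) and to identify $\rho$ in terms of the data $(\Phi,\varphi)$, then recognise the two real quantities as the real and imaginary parts of $\psi=-1/\varphi$. First I would contract the structure equations $d\omega_i=\beta\wedge\omega_i+\omega_j\wedge\omega_k$ with $\partial_\theta$. Using $\omega_i(\partial_\theta)=-x_i$ and the identity $dx_i=\partial_\theta\ip d\omega_i$ on $TS$ noted just above the lemma, the contraction gives an expression of the form $dx_i=\beta(\partial_\theta)\,\omega_i-(\partial_\theta\ip\beta)\,\omega_i+\bigl(x_k\,\omega_j-x_j\,\omega_k\bigr)$ after expanding each Cartan formula term; since $\beta(\partial_\theta)$ is a scalar this should collapse to something like $dx_i+\beta(\partial_\theta)\,\omega_i=\text{(terms quadratic in the }x\text{'s and }\omega\text{'s)}$. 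The point is that $\partial_\theta$ being tangent to $S$ and a Killing field lets one evaluate $\partial_\theta\ip d\omega_i$ cleanly, and comparing with the known value of $dx_i$ pins down $\beta(\partial_\theta)$ as an explicit algebraic combination of the $x_i$.

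Next I would bring in the relation between the $x_i$, the potential $V$, and $\varphi$. On $S$ we have $x_i=-e^t\alpha_i(\partial_\theta)=-\omega_i(\partial_\theta)$, and from the Gibbons--Hawking side $V=g(\partial_\theta,\partial_\theta)^{-1}$, with $\rho=|x|$ and the canonical-slice identities from Lemma~\ref{lem:slice}: $V=|\varphi|^2$ and $\rho=\mathrm{Im}\,\psi$. So $\rho=\mathrm{Im}\,\psi$ is already half the statement, essentially for free. For the other half I expect $\beta(\partial_\theta)$ to work out to $-\mathrm{Re}\,\psi$; the cleanest route is probably to express $\beta(\partial_\theta)$ in terms of $\eta(\partial_t)$ and $V$, using $\eta=(\mathrm{Re}\,\varphi)\,\tfrac{d\rho}{\rho}+\xi$ together with the fact (from Section~\ref{section:canonical}) that $d\theta+\eta$ cuts out the orthogonal complement of $\partial_\theta$, and that $\beta=(dt+\beta_0)|_{TS}$ where $\beta_0$ is determined by the structure equations~(\ref{eqn:structure}). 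One then checks $\beta(\partial_\theta)=-\eta(\partial_t)/V$ or a similar ratio, and $\eta(\partial_t)=\mathrm{Re}\,\varphi$ gives $\beta(\partial_\theta)=-(\mathrm{Re}\,\varphi)/|\varphi|^2=-\mathrm{Re}\,\psi$ by the identity $\psi=-1/\varphi=(-\overline{\varphi})/|\varphi|^2$. Combining, $-\beta(\partial_\theta)+\bi\rho=\mathrm{Re}\,\psi+\bi\,\mathrm{Im}\,\psi=\psi$.

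The main obstacle I anticipate is bookkeeping the restriction to $S$ correctly: the forms $\alpha_i$, $\beta_0$, and $\eta$ live on $W$ (or on $M\times\R_t$), while $\omega_i$ and $\beta$ are their pullbacks to the canonical slice, and one must be careful that contracting with $\partial_\theta$ commutes with restriction (it does, since $\partial_\theta$ is tangent to $S$) and that the term $dt|_{TS}$ in $\beta$ is not zero — indeed $S=G\times S^1_\theta$ is the graph $t=\log\mathrm{Im}\,\psi-\log\rho_0$, so $dt|_{TS}$ is a genuine nonzero $1$-form. The relation $\rho=e^t\rho_0$ must be used to convert $t$-derivatives into derivatives of $\rho$ and hence of $\psi$. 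Once the value of $\beta(\partial_\theta)$ is expressed through $\eta(\partial_t)$ and $V$, the rest is the straightforward algebra $\psi=-1/\varphi$ written in real and imaginary parts, which I will not grind through here.
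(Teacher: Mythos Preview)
Your second paragraph is essentially the paper's argument. The identity you guess, $\beta(\partial_\theta)=\eta(\partial_t)/V$ on~$S$ (with a plus sign, not minus), is exactly what the paper derives, and the method is the one you gesture at but do not quite name: compute $g(\partial_\theta,\partial_t)$ from each of the two expressions for the metric in~(\ref{eqn:g}) --- the Gibbons--Hawking form and the contact-sphere form~(\ref{eqn:metric}) --- to get $V^{-1}\eta(\partial_t)=e^t\Lambda_0^{-1}\beta_0(\partial_\theta)$, then use $e^t=\Lambda_0$ on~$S$ and $\beta(\partial_\theta)=\beta_0(\partial_\theta)$ since $dt(\partial_\theta)=0$. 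The rest is the algebra you describe.

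Your first paragraph, by contrast, does not lead to the goal. Contracting $d\omega_i=\beta\wedge\omega_i+\omega_j\wedge\omega_k$ with $\partial_\theta$ yields $dx_i=\beta(\partial_\theta)\,\omega_i+x_i\beta-x_j\omega_k+x_k\omega_j$, a relation among $1$-forms that the paper does exploit --- but only \emph{after} this lemma, in deriving equation~(\ref{eqn:2}). Contracting once more with $\partial_\theta$ gives $0=0$, so this route alone cannot isolate the scalar $\beta(\partial_\theta)$. One small sign slip to flag: $-(\mathrm{Re}\,\varphi)/|\varphi|^2=+\mathrm{Re}\,\psi$, not $-\mathrm{Re}\,\psi$; two sign errors have cancelled in your sketch, so the conclusion is correct.
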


\begin{proof}
Lemma~\ref{lem:slice} states that on $S$ the imaginary part
of $\psi$ is~$\rho$. In order to determine the real part,
we use the following
alternative expressions for the hyperk\"ahler metric $g$ on~$W$:
\begin{equation}
\label{eqn:g}
V^{-1}\cdot (d\theta +\eta )^2+V\cdot (dx_1^2+dx_2^2+dx_3^2) =
e^t\bigl( \Lambda_0^{-1}\cdot (dt+\beta_0)^2+\Lambda_0\cdot
(\alpha_1^2+\alpha_2^2+\alpha_3^2)\bigr) .
\end{equation}
Computing $g(\partial_\theta ,\partial_t)$ in the two possible ways,
we get $V^{-1}\eta (\partial_t)=e^t\Lambda_0^{-1}\beta_0
(\partial_\theta )$. On the canonical slice we have $e^t=\Lambda_0$
and, by Lemma~\ref{lem:slice}, $V=|\varphi |^2$. Moreover,
$\eta (\partial_t)=\mathrm{Re}\,\varphi$ by the definition
of $\varphi$ in Theorem~\ref{thm:GH}. So on $S$ we have
\[ \beta (\partial_\theta )=\beta_0(\partial_{\theta})=
\frac{\mathrm{Re}\,\varphi}{|\varphi |^2}=-\mathrm{Re}\,\psi ,\]
as claimed.
\end{proof}

From (\ref{eqn:g}) we find the following alternative expressions
for the metric $g_3$ induced on~$S$:
\begin{equation}
\label{eqn:g3}
|\varphi|^{-2}\cdot (d\theta +\eta )^2+|\varphi |^2
\cdot (dx_1^2+dx_2^2+dx_3^2) =
\beta^2+\omega_1^2+\omega_2^2+\omega_3^2.
\end{equation}
Introduce the auxiliary $1$-form $\gamma :=\sum_{i=1}^3x_i\omega_i$ on~$S$.
By taking the interior product with
$\partial_\theta$ in (\ref{eqn:g3}) we find, with Lemma~\ref{lem:psi},
\begin{equation}
\label{eqn:1}
|\varphi |^{-2}\cdot (d\theta +\eta )=
\beta (\partial_\theta )\beta -\gamma =
-(\mathrm{Re}\,\psi ) \beta -\gamma.
\end{equation}
With the structure equations~(\ref{eqn:structure-omega}) for
$d\omega_i$, we obtain from $dx_i=\partial_\theta\ip d\omega_i$
the equations
\[ dx_i=\beta (\partial_\theta )\omega_i+x_i\beta
-x_j\omega_k+x_k\omega_j .\]
This yields
\begin{equation}
\label{eqn:2}
\rho\, d\rho =\sum_ {i=1}^3x_i\, dx_i=
\rho^2\beta +\beta (\partial_\theta)\gamma =
\rho^2\beta -(\mathrm{Re}\,\psi )\gamma .
\end{equation}
Formul\ae\ (\ref{eqn:1}) and~(\ref{eqn:2}) constitute a linear system
for $\beta$ and~$\gamma$.
We solve it for $\beta$, observing that from Lemma~\ref{lem:psi}
we have $\beta (\partial_{\theta})^2+\rho^2=|\psi |^2$, to obtain
\begin{eqnarray*}
\beta & = & |\psi |^{-2}\cdot\bigl(
            \rho\, d\rho -(\mathrm{Re}\,\psi )|\varphi |^{-2}\cdot
            (d\theta +\eta )\bigr) \\
      & = & |\psi |^{-2}\rho\, d\rho - (\mathrm{Re}\,\psi )
            (d\theta +\eta ).
\end{eqnarray*}
Using the expression $\eta =(\mathrm{Re}\,\varphi)\frac{d\rho}{\rho}+\xi
=-|\psi |^{-2}(\mathrm{Re}\,\psi )\frac{d\rho}{\rho}+\xi$
from Theorem~\ref{thm:GH}, as well as Lemma~\ref{lem:psi} and the identities
in Lemma~\ref{lem:slice}, we transform the last equality as follows:
\begin{eqnarray*}
\beta & = & |\psi |^{-2}\cdot \bigl(\rho^2 +(\mathrm{Re}\,\psi )^2\bigr)
            \frac{d\rho}{\rho}
            -(\mathrm{Re}\,\psi )(d\theta +\xi )\\
      & = & \frac{d\rho}{\rho} - (\mathrm{Re}\,\psi )(d\theta +\xi )\\
      & = & d\log\mathrm{Im}\,\psi  - (\mathrm{Re}\,\psi )(d\theta +\xi ).
\end{eqnarray*}
This is the desired explicit expression for~$\beta$.

We next want to determine the subset of $S$ where $\beta$ vanishes.
Write the canonical slice as $S=G\times S^1_{\theta}$ as
in Lemma~\ref{lem:slice}, and points in $G$ as $(p,t(p))$ with
$p\in\Sigma$. We then see that $\beta$
vanishes precisely along the circles $\{ (p,t(p))\}\times S^1_\theta$
for those points $p\in\Sigma$ where $d\,\mathrm{Im}\,\psi$ and
$\mathrm{Re}\,\psi$ vanish simultaneously. It is easy to verify
that for  a non-constant holomorphic function $\psi$ such points form a
discrete subset of~$\Sigma$. Therefore $\beta$ is non-zero outside a
discrete set of circles, all of which are orbits of the $S^1$-action on the
canonical slice.

Finally, we want to prove that $\beta$ is a contact form outside the
described vanishing set. From $d(d\omega_i)=0$ and the structure
equations~(\ref{eqn:structure-omega}) one gets
\[ d\beta\wedge\omega_i+\beta\wedge\omega_j\wedge\omega_k=0.\]
Write $\beta =b_1\omega_1+b_2\omega_2+b_3\omega_3$. Then
\[ d\beta = - b_1\,\omega_2\wedge\omega_3-b_2\,\omega_3\wedge\omega_1-b_3
\,\omega_1\wedge\omega_2,\]
and so
\[ \beta\wedge d\beta
=-(b_1^2+b_2^2+b_3^2)\,\omega_1\wedge\omega_2\wedge\omega_3
=-(b_1^2+b_2^2+b_3^2)\,\omega_i\wedge d\omega_i.\]
So $\beta$ is indeed a contact form where it is non-zero, and
we also observe that the induced orientation is opposite
to the one defined by the~$\omega_i$.

\begin{ack}
Dragan Vukoti\'c helped us with many
questions and discussions about analytic functions on the unit disc,
setting us on the right track several times. Francisco Mart\'{\i}n
Serrano and Antonio Mart\'{\i}nez L\'opez gave us valuable
information on the same subject. Maria Jos\'e Mart\'{\i}n G\'omez
provided us with the fundamental reference \cite{bcp83}. Ernesto
Girondo made the computer generated hyperbolic tessellation. It is a
pleasure to thank all of them.
\end{ack}

\end{document}